\newtheorem{theorem}{Theorem}
\newtheorem{corollary}[theorem]{Corollary}
\newtheorem{lemma}[theorem]{Lemma}
\newtheorem{definition}[theorem]{Definition}
\crefname{theorem}{theorem}{Theorems}
\crefname{lem}{Lemma}{Lemmas}
\crefname{cor}{Corollary}{Corollaries}
\crefname{prop}{Proposition}{Propositions}
\crefname{proposition}{Proposition}{Propositions}
\crefname{defn}{Definition}{Definitions}
\crefname{exm}{Example}{Examples}
\crefname{rem}{Remark}{Remarks}
\crefname{section}{Section}{Sections}
\crefname{equation}{\unskip}{\unskip}
\crefname{enumi}{\unskip}{\unskip}
\newcommand{\red}[1]{\textcolor{black}{#1}}
\begin{document}

\noindent{\Large
Transposed Poisson structures on the extended Schr\"{o}dinger-Virasoro and the original deformative Schr\"{o}dinger-Virasoro algebras}

	\bigskip
	
	 \bigskip

\begin{center}	
	{\bf
Zarina Shermatova\footnote{Kimyo International University in Tashkent; V.I.Romanovskiy Institute of Mathematics Academy of Science of Uzbekistan;  \
z.shermatova@mathinst.uz}}
\end{center}

 \bigskip

\noindent {\bf Abstract.}
{\it
We compute   $\frac{1}{2}$-derivations on the extended Schr\"{o}dinger-Virasoro
 \footnote{We refer the notion of \textquotedblleft Witt\textquotedblright  \   algebra to the simple Witt algebra and the notion of \textquotedblleft Virasoro\textquotedblright  \  algebra to the central extension of the simple Witt algebra.}
 algebras and the  original deformative  Schr\"{o}dinger-Virasoro algebras.  The extended Schr\"{o}dinger-Virasoro  algebras have neither nontrivial  $\frac{1}{2}$-derivations nor nontrivial transposed Poisson algebra structures. We demonstrate that the  original deformative  Schr\"{o}dinger-Virasoro algebras have nontrivial  $\frac{1}{2}$-derivations, indicating that they possess  nontrivial  transposed Poisson structures.

}

 \bigskip
\noindent {\bf Keywords}:
{\it
Lie algebra; extended Schr\"{o}dinger-Virasoro algebra;
original deformative Schr\"{o}dinger-Virasoro algebra;
transposed Poisson algebra;  $\frac 1 2$-derivation.
}

 \bigskip
\noindent {\bf MSC2020}: 17A30, 17B40, 17B63.

 \bigskip
\section*{Introduction}

Poisson algebras appear in a variety of geometric and algebraic contexts, including Poisson manifolds, 
algebraic geometry, 
noncommutative geometry, 
operads, 
quantization theory, 
quantum groups,
etc. The study of Poisson algebras also led to other algebraic structures, such as generic Poisson algebras,
algebras of Jordan brackets and generalized Poisson algebras,
Gerstenhaber algebras,
Novikov-Poisson algebras,
Quiver Poisson algebras,
etc. In the recent paper \cite{bai20}, the authors initiated a study of a notion of a transposed Poisson algebra by reversing the roles of the two operations in the Leibniz rule that defines a Poisson algebra. A transposed Poisson algebra defined in this manner not only retains some characteristics of a Poisson algebra, such as closure under tensor products and Koszul self-duality as an operad, but also encompasses a diverse range of identities \cite{kms,fer23,lb23,bl23}. It is noteworthy that the authors provided several constructions of  transposed Poisson algebras from Novikov-Poisson algebras, commutative associative algebras and pre-Lie algebras \cite{bai20}. Moreover, transposed Poisson algebras are related to weak Leibniz algebras \cite{dzhuma}.

Let $R_z$ (resp., $L_z$) denote the operator of the right (resp., left) multiplication by an element $z\in L.$ We see from definition of transposed Poisson algebra that  both  $R_z$ and  $L_z$	are  $\frac{1}{2}$-derivations on Lie algebra. Actually, $R_z=L_z$ for all $z\in L.$
This motivated to define
 all transposed Poisson structures
	on  Witt and Virasoro algebras in  \cite{FKL};
	on   twisted Heisenberg-Virasoro,   Schr\"odinger-Witt  and
	extended Schr\"odinger-Witt algebras in \cite{yh21};
	on Schr\"odinger algebra in $(n+1)$-dimensional space-time in \cite{ytk}; on the
$n$-th Schr\"odinger algebra in \cite{wan};
\red{on solvable Lie algebra with filiform nilradical} in \cite{aae23};
on oscillator Lie algebras in \cite{kkh24};
	on Witt type Lie algebras in \cite{kk23};
	on generalized Witt algebras in \cite{kkg23}, \cite{kkz1}; on Virasoro-type  algebras in \cite{kkz2}; on loop Heisenberg-Virasoro  algebras \cite{yan};
 Block Lie algebras in \cite{kk22,kkg23}
  and
on    Lie incidence algebras (for all references, see the survey \cite{k23}).


 The characterization of transposed Poisson structures derived on the Witt algebra \cite{FKL} raises the question of identifying algebras related to the Witt algebra that possess nontrivial transposed Poisson structures. Consequently, several algebras associated with the Witt algebra are examined in prior works \cite{kk23,kk22,kkg23}. This paper extends that line of research. Specifically, we detail transposed Poisson structures on central extensions of the extended  Schr\"{o}dinger-Witt algebras  and   the original deformative Schr\"{o}dinger-Witt algebras.
The Schr\"{o}dinger-Witt algebra $\mathfrak{so},$  originally introduced  by Henkel \cite{h94} during his study on the invariance of the free  Schr\"{o}dinger equation, is a vector space over the complex field $\mathbb{C}$ with a basis $\{L_n, M_n, Y_{n+\frac{1}{2}} \ | \ n \in \mathbb{Z}\}$ satisfying the following non-vanishing relations
\begin{longtable}{lllll}
$[L_m,L_n]$&$=$&$(n-m)L_{m+n},$\\
$[L_m,M_n]$&$=$&$nM_{m+n},$ \\
$[L_m,Y_{n+\frac{1}{2}}]$&$=$&$\left(n+\frac{1-m}{2}\right)Y_{m+n+\frac{1}{2}},$\\
$[Y_{m+\frac{1}{2}},Y_{n+\frac{1}{2}}]$&$=$&$(m-n)M_{m+n+1}.$\\
\end{longtable}

It is easy to see that  $\mathfrak{so}$ is a semi-direct product of the Witt algebra and the two-step nilpotent infinite-dimensional Lie algebra. The structure and representation theory of  $\mathfrak{so}$ have been extensively studied by Roger and Unterberger \cite{ru06}. In order to investigate vertex representations of $\mathfrak{so},$
Unterberger \cite{unt} introduced a class of new infinite-dimensional Lie algebras  $\widetilde{\mathfrak{so}}$ called the extended Schr\"{o}dinger-Witt algebra, which can be viewed as an extension of $\mathfrak{so}$ by conformal current with conformal weight 1.
In \cite{gwp}, authors studied the derivations, the central extensions and the automorphism group of
the extended  Schr\"{o}dinger-Witt  algebra. In \cite{yua}, Lie bialgebra structure on the extended
 Schr\"{o}dinger-Witt  algebra was obtained. The notion of $n$-derivation of the extended Schr\"{o}dinger-Witt  algebra was investigated in \cite{zho}, and the main result when $n = 2$ was applied to characterize the
linear commuting maps and the commutative post-Lie algebra structures on  $\widetilde{\mathfrak{so}}.$

 Both original   and twisted   Schr\"{o}dinger-Witt algebras, and also their deformations were introduced by Henkel \cite{h94},  Unterberger \cite{hu} and Roger \cite{ru06}, in the context of non-equilibrium statistical physics, closely related to both   Schr\"{o}dinger Lie algebras and the Virasoro  algebras, which are known to be important in many areas of mathematics and physics.  Unterberger \cite{unt} constructed the explicit non-trivial vertex algebra representations of the original sector.
In \cite{jw15}  the derivation algebra and the automorphism
group of the original deformative Schr\"{o}dinger-Witt algebras $L_{\lambda,\mu}$ were described. Moreover,
 the second cohomology group of $L_{\lambda,\mu}$
were determined in   \cite{lsz}.


In this paper,  we first point out if a Lie algebra $L$ is $Z$-graded, then the space consisting of all $\frac{1}{2}$-derivations of $L$  is naturally $Z$-graded. With this simple but key observation, we calculate  $\frac{1}{2}$-derivations on central extensions of the extended  Schr\"{o}dinger-Witt  algebras $\widetilde{\mathfrak{so}}$  and  the original deformative Schr\"{o}dinger-Witt algebras $L_{\lambda,\mu}$ by directly calculating any homogeneous subspace of $\frac{1}{2}$-derivations on them. In addition, we prove that central extension of  $L_{\lambda,\mu}$ admits nontrivial transposed Poisson structures only for $\lambda=1$;
the extended  Schr\"{o}dinger-Virasoro   algebras do not admit nontrivial  $\frac{1}{2}$-derivations, also have no  nontrivial transposed Poisson structures.



\section{Preliminaries}\ \ \ \ \ \ \ \ \

In this section, we recall some definitions and known results for studying transposed Poisson structures. Although all algebras and vector spaces are considered over the complex field, many results can be proven over other fields without modifications of proofs.

\begin{definition}
Let $\mathfrak {L}$ be a vector space equipped with two nonzero bilinear operations $\cdot$ and $[\cdot, \cdot]$. The triple $(\mathfrak {L}, \cdot, [\cdot, \cdot])$ is called a transposed Poisson algebra if $(\mathfrak {L}, \cdot)$ is a commutative associative algebra and $(\mathfrak {L}, [\cdot, \cdot])$ is a Lie algebra that satisfies the following compatibility condition
$$
\begin{array}{c}
2z \cdot [x,y]=[z \cdot x, y]+[x, z \cdot y].
\end{array}
$$
\end{definition}

\begin{definition}
Let $(\mathfrak{L}, [\cdot, \cdot])$ be a Lie algebra. A transposed Poisson structure on $(\mathfrak {L}, [\cdot, \cdot])$ is a commutative associative multiplication $\cdot$ in $\mathfrak {L}$ which makes $(\mathfrak {L}, \cdot, [\cdot, \cdot])$ a transposed Poisson algebra.
\end{definition}

\begin{definition}
Let $(\mathfrak {L}, [\cdot, \cdot])$ be a Lie algebra, $\varphi: \mathfrak {L}\rightarrow \mathfrak {L} $ be a linear map. Then $\varphi$ is a $\frac 12$-derivation if it satisfies
$$
\begin{array}{c}
\varphi ([x,y])= \frac 12 \big([\varphi (x), y]+[x, \varphi (y)]\big).
\end{array}
$$
\end{definition}

Observe that $\frac 12$-derivations are a particular case of $\delta$-derivations introduced by Filippov in 1998 \cite{fil} and
recently the notion of $\frac{1}{2}$-derivations of algebras was generalized to
 $\frac{1}{2}$-derivations from an algebra to a module \cite{zz}.
The main example of $\frac 12$-derivations is the multiplication by an element from the ground field. Let us call such $\frac 12$-derivations as  {trivial
$\frac 12$-derivations.} It is easy to see that $[\mathfrak {L},\mathfrak {L}]$ and $\operatorname{Ann}(\mathfrak {L})$ are invariant under any $\frac 12$-derivation of $\mathfrak {L}$.

Let $G$ be an abelian group, $\mathfrak{L}=\bigoplus \limits_{g\in G}\mathfrak{L}_{g}$ be a $G$-graded Lie algebra.
We say that a $\frac 12$-derivation $\varphi$ has degree $g$ ($deg(\varphi)=g$) if $\varphi(\mathfrak{L}_{h})\subseteq \mathfrak{L}_{g+h}$.
Let $\triangle(\mathfrak{L})$ denote the space of $\frac 12$-derivations and write $\triangle_{g}(\mathfrak{L})=\{\varphi \in \triangle(\mathfrak{L}) \mid \deg(\varphi)=g\}$.
The following trivial lemmas are useful in our work.

\begin{lemma}
Let $\mathfrak{L}=\bigoplus \limits_{g\in G}\mathfrak{L}_{g}$ be a $G$-graded Lie algebra. Then
$
\triangle(\mathfrak{L})=\bigoplus \limits_{g\in G}\triangle_{g}(\mathfrak{L}).
$
\end{lemma}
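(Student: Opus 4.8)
The plan is to follow the standard pattern by which derivation-type operators on a graded algebra inherit the grading: I would show (i) that every $\frac12$-derivation $\varphi$ can be written as a locally finite sum of homogeneous pieces $\varphi_g$, each of which is again a $\frac12$-derivation, and (ii) that this decomposition is unique, so that the sum is direct. Writing $\pi_k\colon \mathfrak{L}\to\mathfrak{L}_k$ for the projection onto the degree-$k$ summand, for each $g\in G$ I would define $\varphi_g$ on a homogeneous element $x\in\mathfrak{L}_h$ by $\varphi_g(x)=\pi_{g+h}(\varphi(x))$ and extend linearly. By construction $\varphi_g(\mathfrak{L}_h)\subseteq\mathfrak{L}_{g+h}$, so $\deg(\varphi_g)=g$, and since $\varphi(x)$ has only finitely many nonzero homogeneous components, $\varphi(x)=\sum_g\varphi_g(x)$ for every $x$; that is, $\varphi=\sum_g\varphi_g$.

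The key step is to verify that each $\varphi_g$ is itself a $\frac12$-derivation, and here I would exploit that the bracket is homogeneous: for $x\in\mathfrak{L}_a$ and $y\in\mathfrak{L}_b$ one has $[x,y]\in\mathfrak{L}_{a+b}$. Substituting the decomposition into the defining identity $\varphi([x,y])=\frac12\big([\varphi(x),y]+[x,\varphi(y)]\big)$ and comparing the homogeneous components of degree $a+b+g$ on the two sides — using that $\varphi_g([x,y])$, $[\varphi_g(x),y]$, and $[x,\varphi_g(y)]$ all lie in $\mathfrak{L}_{a+b+g}$ — isolates exactly $\varphi_g([x,y])=\frac12\big([\varphi_g(x),y]+[x,\varphi_g(y)]\big)$. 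Since this holds on all pairs of homogeneous elements and both sides are bilinear, it extends to all of $\mathfrak{L}$, giving $\varphi_g\in\triangle_g(\mathfrak{L})$. Directness then follows formally: a relation $\sum_g\varphi_g=0$ evaluated on $x\in\mathfrak{L}_h$ yields $\sum_g\varphi_g(x)=0$ with the terms $\varphi_g(x)\in\mathfrak{L}_{g+h}$ sitting in distinct graded components, which forces each $\varphi_g(x)=0$ and hence each $\varphi_g=0$.

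The point I would flag as the only genuine subtlety is the meaning of the direct sum when $G$ is infinite: a priori a single $\frac12$-derivation may have infinitely many nonzero homogeneous components, so $\bigoplus_{g\in G}\triangle_g(\mathfrak{L})$ has to be understood as the space of maps whose homogeneous decomposition is \emph{locally finite}, i.e.\ finite on each homogeneous element, which is precisely what the projection construction produces. With that reading the entire argument is purely formal, driven only by the homogeneity of the Lie bracket, and no convergence or finite-dimensionality input is needed; this is why the statement can be recorded as a trivial lemma and invoked later to reduce the computation of $\triangle(\mathfrak{L})$ to the separate determination of each homogeneous space $\triangle_g(\mathfrak{L})$.
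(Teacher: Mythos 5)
Your proof is correct, and there is nothing in the paper to compare it against: the lemma is listed among the ``trivial lemmas'' and stated without any justification, so your write-up supplies the missing argument rather than paralleling one. What you give is the standard projection argument --- defining $\varphi_g$ on $x\in\mathfrak{L}_h$ by $\varphi_g(x)=\pi_{g+h}(\varphi(x))$, using homogeneity of the bracket to extract the $\frac12$-derivation identity for $\varphi_g$ from the degree-$(a+b+g)$ component of the identity for $\varphi$, and getting directness by comparing graded components --- and every step checks out. The caveat you flag is also genuine, and it is the only point of real substance: for infinite $G$ a $\frac12$-derivation need not be a \emph{finite} sum of homogeneous ones, so the stated equality is literally true only under the locally finite reading of $\bigoplus_{g\in G}\triangle_{g}(\mathfrak{L})$ that you adopt. (A concrete witness: on the abelian $\mathbb{Z}$-graded Lie algebra $\bigoplus_{n\in\mathbb{Z}}\mathbb{C}e_n$ with zero bracket, every linear map is a $\frac12$-derivation, and $e_n\mapsto e_{2n}$ has a nonzero homogeneous component in every degree.) Your reading is moreover exactly how the lemma is used later in the paper: the computations for $\hat{\mathfrak{so}}$ and $\widetilde{L_{\lambda,\mu}}^i$ determine each homogeneous space $\triangle_g$ separately and then reassemble a general $\frac12$-derivation from a locally finite family of homogeneous pieces, which is what the sums $\sum_{t\in\mathbb{Z}}\alpha_tM_{m+t}$, $\sum_{t\in\mathbb{Z}}\beta_tY_{m+t+\frac{1}{2}}$ in Theorem~\ref{maini2} implicitly are (local finiteness at a single element $L_0$ forces these coefficient families to have finite support, so no completion is needed there).
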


\begin{lemma} \label{l01}(see {\rm\cite{FKL}})
Let $(\mathfrak {L}, \cdot, [\cdot, \cdot])$ be a transposed Poisson algebra and $z$ an arbitrary element from $\mathfrak {L}$.
Then the left multiplication $L_{z}$ in the   commutative associative algebra $(\mathfrak {L}, \cdot)$ gives a $\frac 12$-derivation of the Lie algebra $(\mathfrak {L}, [\cdot, \cdot])$.
\end{lemma}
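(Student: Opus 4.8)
The plan is to verify the $\frac{1}{2}$-derivation identity directly by substituting the left-multiplication operator into the defining condition and invoking the compatibility axiom of the transposed Poisson algebra. Recall that $L_z$ acts by $L_z(x) = z \cdot x$, so for arbitrary $x, y \in \mathfrak{L}$ the assertion to be checked is
$$
L_z([x,y]) = \tfrac{1}{2}\big([L_z(x), y] + [x, L_z(y)]\big).
$$

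First I would rewrite the right-hand side using the definition of $L_z$, obtaining $\tfrac{1}{2}\big([z \cdot x, y] + [x, z \cdot y]\big)$. The compatibility condition defining a transposed Poisson algebra reads $2z \cdot [x,y] = [z \cdot x, y] + [x, z \cdot y]$, so the bracketed expression equals $2z \cdot [x,y]$. Hence the right-hand side collapses to $z \cdot [x,y]$, which is exactly $L_z([x,y])$, completing the verification. Since $x$ and $y$ were arbitrary and $z$ was arbitrary from the outset, this establishes that every left multiplication is a $\frac{1}{2}$-derivation.

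I do not expect any genuine obstacle here: the statement is essentially an immediate reformulation of the defining axiom, and the only point requiring care is matching the factor of $\tfrac{1}{2}$ in the derivation identity against the factor of $2$ in the compatibility condition, which cancel precisely. The actual content of the lemma is the observation that the transposed Poisson compatibility condition \emph{is} the assertion that each $L_z$ satisfies the $\frac{1}{2}$-derivation identity; this is what makes $\frac{1}{2}$-derivations the natural object to compute when classifying transposed Poisson structures, as exploited throughout the rest of the paper.
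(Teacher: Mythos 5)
Your proof is correct and is exactly the intended argument: the paper itself treats this lemma as immediate from the definition (it cites \cite{FKL} and notes in the introduction that $L_z$ being a $\frac{1}{2}$-derivation is seen directly from the transposed Poisson compatibility condition), which is precisely your unfolding of $L_z([x,y]) = z\cdot[x,y] = \tfrac{1}{2}\bigl([z\cdot x,y]+[x,z\cdot y]\bigr)$. Nothing is missing; the factor-of-$2$ bookkeeping you flag is the only point of care, and you handle it correctly.
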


\begin{lemma}  (see {\rm\cite{FKL}})
Let $\mathfrak {L}$ be a Lie algebra without non-trivial $\frac 12$-derivations. Then every transposed Poisson structure defined on $\mathfrak {L}$ is trivial.
\end{lemma}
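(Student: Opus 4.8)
The plan is to convert the commutative associative multiplication of the transposed Poisson structure into $\frac12$-derivations, use the hypothesis to force every such operator to be scalar, and then exploit commutativity to make those scalars vanish. First I would fix an arbitrary transposed Poisson structure $(\mathfrak{L},\cdot,[\cdot,\cdot])$ and, for each $z\in\mathfrak{L}$, consider the left multiplication operator $L_z\colon x\mapsto z\cdot x$. By Lemma~\ref{l01}, every $L_z$ is a $\frac12$-derivation of the Lie algebra $(\mathfrak{L},[\cdot,\cdot])$. Since $\mathfrak{L}$ admits no nontrivial $\frac12$-derivations, each $L_z$ must be trivial, i.e.\ a scalar multiple of the identity; thus there is a scalar $\alpha_z\in\mathbb{C}$ with $z\cdot x=\alpha_z x$ for all $x\in\mathfrak{L}$.

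The next step is to observe that $z\mapsto\alpha_z$ defines a linear functional $\alpha\colon\mathfrak{L}\to\mathbb{C}$, which follows at once from the bilinearity of $\cdot$ in its first argument. The decisive ingredient is the commutativity of the product: from $z\cdot x=x\cdot z$ we obtain $\alpha_z x=\alpha_x z$ for all $x,z\in\mathfrak{L}$. Picking $x$ and $z$ linearly independent --- possible since $\dim\mathfrak{L}\ge 2$ --- the relation $\alpha_z x-\alpha_x z=0$ forces $\alpha_z=\alpha_x=0$. As every element of $\mathfrak{L}$ lies in some linearly independent pair, we conclude $\alpha\equiv 0$, so that $z\cdot x=0$ for all $x,z$; hence the multiplication $\cdot$ is identically zero and the transposed Poisson structure is trivial.

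I do not expect a genuine obstacle: the argument is entirely formal once Lemma~\ref{l01} is in hand, and the only points deserving care are the meaning of a trivial $\frac12$-derivation (scalar multiplication by a ground-field element) and the harmless standing assumption $\dim\mathfrak{L}\ge 2$, valid for any Lie algebra carrying a nonzero bracket. If anything, the mild subtlety sits in the commutativity step, which is precisely where the scalar functional is shown to vanish and thus carries the actual content of the statement.
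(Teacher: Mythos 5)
Your proof is correct and takes essentially the same route as the source: the paper itself gives no proof of this lemma (it is quoted from \cite{FKL}), and the argument there is exactly yours --- each left multiplication $L_z$ is a $\frac{1}{2}$-derivation by Lemma~\ref{l01}, hence a scalar $\alpha_z\,{\rm Id}$ by hypothesis, and commutativity $\alpha_z x=\alpha_x z$ evaluated on a linearly independent pair forces the linear functional $z\mapsto\alpha_z$ to vanish, so the product is zero. Your side remark that $\dim\mathfrak{L}\ge 2$ is guaranteed because the bracket is required to be nonzero correctly disposes of the only degenerate case (a one-dimensional algebra, where the statement would actually fail).
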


\section{Transposed Poisson   structures on the extended Schr\"{o}dinger-Virasoro algebras}\ \ \ \ \ \ \ \ \

Unterberger \cite{unt} introduced a class of new infinite-dimensional Lie algebras $\widetilde{\mathfrak{so}}$ called the extended Schr\"{o}dinger-Witt algebra.
\begin{definition}
The extended    Schr\"{o}dinger-Witt Lie algebra   $\widetilde{\mathfrak{so}}$ is a vector space spanned by a basis  $\{L_n, M_n, N_n, Y_{n+\frac{1}{2}} \ | \ n \in \mathbb{Z}\}$ with the following brackets:
\begin{longtable}{lllll}
     $[L_m,L_n]=(n-m)L_{m+n},$&$[L_m,M_n]=nM_{m+n},$\\
     $ [L_m,N_n]=nN_{m+n},$ & $[N_m,M_n]=2M_{m+n},$\\
     $ [L_m,Y_{n+\frac{1}{2}}]=(n+\frac{1-m}{2})Y_{m+n+\frac{1}{2}},$ &$[N_m,Y_{n+\frac{1}{2}}]=Y_{m+n+\frac{1}{2}},$&$[Y_{m+\frac{1}{2}},Y_{n+\frac{1}{2}}]=(m-n)M_{m+n+1},$\\
     \end{longtable}
for all $m,n\in \mathbb{Z}.$
\end{definition}

In \cite{yh21} the authors computed $\frac{1}{2}$-derivations on extended Schr\"{o}dinger-Witt algebras.
\begin{theorem}(see \cite{yh21})
  Every   $\frac{1}{2}$-derivation on $\widetilde{\mathfrak{so}}$ is trivial.
\end{theorem}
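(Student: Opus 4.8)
The plan is to leverage the $\frac12\mathbb{Z}$-grading on $\widetilde{\mathfrak{so}}$ in which $L_n, M_n, N_n$ carry degree $n$ and $Y_{n+\frac12}$ carries degree $n+\frac12$; one checks directly against the defining brackets that this assignment is a grading. By the grading lemma above, $\triangle(\widetilde{\mathfrak{so}})=\bigoplus_{g\in\frac12\mathbb{Z}}\triangle_g(\widetilde{\mathfrak{so}})$, so it suffices to fix a homogeneous $\frac12$-derivation $\varphi$ of degree $g$ and show that $\varphi=0$ unless $g=0$, in which case $\varphi$ is a scalar multiple of the identity.

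First I would record the general homogeneous ansatz, splitting on the parity of $g$. When $g\in\mathbb{Z}$ the map $\varphi$ preserves each sector degree-wise, so $\varphi(L_n),\varphi(M_n),\varphi(N_n)\in\spn\{L_{n+g}, M_{n+g}, N_{n+g}\}$ while $\varphi(Y_{n+\frac12})=\sigma_n Y_{n+g+\frac12}$; when $g\in\frac12+\mathbb{Z}$ the two sectors are interchanged, so $\varphi(L_n),\varphi(M_n),\varphi(N_n)$ are scalar multiples of $Y_{n+g}$ and $\varphi(Y_{n+\frac12})$ lies in $\spn\{L_k, M_k, N_k\}$ with $k=n+g+\frac12\in\mathbb{Z}$. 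In each case this introduces finitely many $\mathbb{Z}$-indexed families of unknown scalars, and the $\frac12$-derivation identity applied to a basis pair becomes a system of recurrences among them.

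The computation then proceeds bracket by bracket. The cleanest starting point is the Euler element: applying the identity to $[L_0,x]$ for each basis vector $x$ converts the grading into algebraic constraints and trims most families at once. Next I would isolate the $L_{m+n}$-component of the identity on $[L_m,L_n]=(n-m)L_{m+n}$; since neither $[M,L]$ nor $[N,L]$ ever produces an $L$, this component depends only on the $L\to L$ coefficients, which therefore satisfy exactly the recurrence of a $\frac12$-derivation of the Witt algebra. By the Witt computation of \cite{FKL} these coefficients vanish for $g\neq0$ and are constant for $g=0$. With the $L\to L$ part under control, I would feed it into the identities on $[L_m,M_n]=nM_{m+n}$, $[L_m,N_n]=nN_{m+n}$ and $[L_m,Y_{n+\frac12}]$ to propagate the rigidity through the remaining families.

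The main obstacle is the coupling inside the $\{M, N, Y\}$ part, where the index-independent brackets $[N_m,M_n]=2M_{m+n}$ and $[N_m,Y_{n+\frac12}]=Y_{m+n+\frac12}$, together with $[Y_{m+\frac12},Y_{n+\frac12}]=(m-n)M_{m+n+1}$, produce cross-terms that the $L$-brackets alone do not detect. Because $M$ spans an abelian ideal close to the annihilator, the coefficients landing in $M$ are the least constrained, and excluding a nontrivial such contribution is precisely where the anomalous $N$- and $Y$-brackets must be combined. Once these force all off-diagonal coefficients and all $g\neq0$ components to vanish, the surviving degree-$0$ data reduces to a single common scalar, and the triviality of $\varphi$ follows.
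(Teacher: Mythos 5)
Your strategy is sound, but be aware of what it is being compared to: the paper offers no proof of this statement at all --- the theorem is quoted from \cite{yh21} --- so the only meaningful comparison is with the proofs the paper gives for its analogues, namely Lemmas~\ref{even} and~\ref{lemm10} for the central extension $\hat{\mathfrak{so}}$ and Lemmas~\ref{even1} and~\ref{lem10} for $\widetilde{L_{\lambda,\mu}}^1$. Measured against those, your skeleton is the same --- the $\frac{1}{2}\mathbb{Z}$-grading, the decomposition $\triangle(\widetilde{\mathfrak{so}})=\bigoplus_{g}\triangle_{g}(\widetilde{\mathfrak{so}})$, a homogeneous ansatz, and bracket-by-bracket recurrences --- and your key reductions are correct. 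Where you genuinely differ is in how the ansatz gets trimmed. First, you isolate the $L$-component of the identity on $[L_m,L_n]$ and observe (correctly) that no bracket involving $M_k$ or $N_k$ produces an $L$, so the $L\to L$ coefficients satisfy the Witt recurrence and \cite{FKL} finishes that family; the paper instead imports wholesale the triviality of $\frac{1}{2}$-derivations of the Virasoro (resp.\ twisted Heisenberg--Virasoro) subalgebra, which is legitimate because projecting along the ideal spanned by the $M$'s and $Y$'s turns a $\frac{1}{2}$-derivation of the whole algebra into one of that subalgebra; the paper's move settles the $L\to N$, $N\to L$ and $N\to N$ families simultaneously, while yours must recover them from $[L_m,N_n]$ and $[N_m,M_n]$. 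Second, for the half-integer (sector-swapping) degrees you propose direct recurrences, whereas the paper's Lemmas~\ref{lemm10} and~\ref{lem10} use a shortcut you did not: $[\varphi,\mathrm{ad}_{Y_{n+\frac{1}{2}}}]$ is a $\frac{1}{2}$-derivation of integer degree, hence a scalar multiple of the identity by the even-degree lemma, and this collapses the half-integer case to a few lines; your direct route also closes (for example, $[N_m,Y_{n+\frac{1}{2}}]=Y_{m+n+\frac{1}{2}}$ kills the $Y\to L$ and $Y\to N$ coefficients at once, and combining the $[L_0,\cdot]$ constraints with the $[N_m,Y_{n+\frac{1}{2}}]$ and $[L_m,Y_{n+\frac{1}{2}}]$ recurrences forces the rest to vanish), but it is noticeably longer. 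The only criticism is completeness rather than correctness: the decisive computations in the $\{M,N,Y\}$ sector are asserted (``once these force all off-diagonal coefficients and all $g\neq 0$ components to vanish'') rather than carried out; they do work --- e.g.\ $[N_m,M_n]=2M_{m+n}$ forces the $M\to L$ and $M\to N$ coefficients to vanish and, at $m=0$, pins the $M\to M$ coefficient to the common scalar, while $[L_0,Y_{n+\frac{1}{2}}]$ forces each $Y\to Y$ coefficient to equal that scalar when $g=0$ and to vanish when $g\neq 0$, using $n+\frac{1}{2}-g\neq 0$ --- so in a final write-up these recurrences must actually be displayed and solved.
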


In this section we consider a central extension of the extended Schr\"{o}dinger-Witt algebra $\widetilde{\mathfrak{so}}.$ In \cite{gwp} it is referred  that $\widetilde{\mathfrak{so}}$ has only three independent classes of central extensions.
Let $\hat{\mathfrak{so}}=\widetilde{\mathfrak{so}}\oplus\mathbb{C}C_L\oplus\mathbb{C}C_{LN}\oplus\mathbb{C}C_N$ be the vector space over the complex field $\mathbb{C}$ with a basis $\{L_n, M_n, N_n,Y_{n+\frac{1}{2}}, C_L, C_{LN}, C_N \ | \ n \in \mathbb{Z}\}$ satisfying the following relations

\begin{longtable}{lllll}
     $[L_m,L_n]=(n-m)L_{m+n}+\delta_{m+n,0}\frac{m^3-m}{12}C_L,$&$[L_m,M_n]=nM_{m+n},$\\
     $ [L_m,N_n]=nN_{m+n}+\delta_{m+n,0}(m^2-m)C_{LN},$ & $[N_m,M_n]=2M_{m+n},$\\
     $ [L_m,Y_{n+\frac{1}{2}}]=(n+\frac{1-m}{2})Y_{m+n+\frac{1}{2}},$ &$[N_m,Y_{n+\frac{1}{2}}]=Y_{m+n+\frac{1}{2}},$\\
   $[Y_{m+\frac{1}{2}},Y_{n+\frac{1}{2}}]=(m-n)M_{m+n+1},$&
 $[N_m,N_n]=n\delta_{m+n,0}C_{N},$\\

     \end{longtable}
for all $m,n\in \mathbb{Z}.$ The infinite-dimensional Lie algebra $\hat{\mathfrak{so}}$ considered in this paper called \textit{the extended Schr\"{o}dinger-Virasoro algebra}.
Denote
$$H=\bigoplus_{n\in\mathbb{Z}}\mathbb{C}N_n\oplus\mathbb{C}C_N, \quad \textbf{Vir}=\bigoplus_{n\in\mathbb{Z}}\mathbb{C}L_n\oplus\mathbb{C}C_L, \quad \mathcal{HV}=H\oplus \textbf{Vir}\oplus\mathbb{C}C_{LN},$$
$$\mathcal{S}=\bigoplus_{n\in\mathbb{Z}}\mathbb{C}M_n\bigoplus_{n\in\mathbb{Z}}\mathbb{C}Y_{n+\frac{1}{2}}, \quad \mathcal{H_S}=H\oplus\mathcal{S}.$$
They are  subalgebras of $\hat{\mathfrak{so}},$ where
$H$ is an infinite-dimensional Heisenberg algebra, $\textbf{Vir}$ is the classical Virasoro algebra, $\mathcal{HV}$
is the twisted Heisenberg-Virasoro algebra, $\mathcal{S}$ is a two-step nilpotent Lie algebra and $\mathcal{H_S}$
is the semi-direct product of $H$ and $\mathcal{S}.$ Then
$\hat{\mathfrak{so}}$ is the semi-direct product of the twisted Heisenberg-Virasoro algebra $\mathcal{HV}$ and $\mathcal{S},$
 and $\mathcal{S}$ is an ideal of $\hat{\mathfrak{so}}.$

The description of $\frac{1}{2}$-derivations on Lie algebra  $\mathcal{HV}$ is given in the following theorem.
\begin{theorem} (see \cite{yh21})\label{tr}
 Every   $\frac{1}{2}$-derivation  on $\mathcal{HV}$ is trivial.
\end{theorem}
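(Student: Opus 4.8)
The plan is to exploit the $\mathbb{Z}$-grading of $\mathcal{HV}$ in which $\deg L_n=\deg N_n=n$ and the three central generators $C_L,C_{LN},C_N$ sit in degree $0$. By the decomposition $\triangle(\mathcal{HV})=\bigoplus_{g\in\mathbb{Z}}\triangle_g(\mathcal{HV})$ it suffices to analyse a single homogeneous $\frac12$-derivation $\varphi$ of degree $g$ and to show that $\varphi=\lambda\cdot\mathrm{id}$ when $g=0$ and $\varphi=0$ otherwise. Before computing, I would record that $C_L,C_{LN},C_N$ (and in fact $N_0$) lie in $\operatorname{Ann}(\mathcal{HV})$, which is invariant under every $\frac12$-derivation. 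Since these generators lie in $\mathcal{HV}_0$, a degree-$g$ derivation carries them into $\mathcal{HV}_g\cap\operatorname{Ann}(\mathcal{HV})$; this intersection is zero for $g\neq0$, so $\varphi$ annihilates all three central generators in that case, while for $g=0$ their images are forced to stay central.

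For $g\neq0$ one has $\mathcal{HV}_g=\mathbb{C}L_g\oplus\mathbb{C}N_g$, so I would write
$$\varphi(L_n)=a_nL_{n+g}+b_nN_{n+g},\qquad \varphi(N_n)=c_nL_{n+g}+d_nN_{n+g},$$
suppressing the central corrections that occur only when the target index $n+g$ equals $0$. The core of the argument is to substitute these expressions into the defining identity $\varphi([x,y])=\tfrac12([\varphi(x),y]+[x,\varphi(y)])$ on the three families of brackets $[L_m,L_n]$, $[L_m,N_n]$, $[N_m,N_n]$ and to compare coefficients of $L_{m+n+g}$, $N_{m+n+g}$, and the central generators.

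The bracket $[L_m,L_n]=(n-m)L_{m+n}+\cdots$ contributes, through its $L_{m+n+g}$-coefficient, precisely the recursion satisfied by $\frac12$-derivations of the Witt part of $\mathbf{Vir}$, so by the classification of $\frac12$-derivations of the Virasoro algebra in \cite{FKL} the coefficients $a_n$ are forced to be a single scalar $\lambda$ independent of $n$ when $g=0$, and to vanish when $g\neq0$. The $N_{m+n+g}$-coefficient of the same identity isolates a closed recursion in the $b_n$ alone, namely $(n-m)b_{m+n}=\tfrac12((n+g)b_n-(m+g)b_m)$, whose only solution is $b_n=0$ (the ``Virasoro-into-Heisenberg'' leakage must vanish). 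Turning to $[L_m,N_n]=nN_{m+n}+\cdots$, whose right-hand side carries no $L_{m+n}$, its $L_{m+n+g}$-coefficient gives $n\,c_{m+n}=\tfrac12(n+g-m)c_n$, which drives $c_n\to0$, while its $N_{m+n+g}$-coefficient yields $n\,d_{m+n}=\tfrac12(n\,a_m+(n+g)d_n)$, tying $d_n$ to the $a_m$; combined with the previous step this forces $d_n=\lambda$ for $g=0$ and $d_n=0$ for $g\neq0$. Finally $[N_m,N_n]=n\delta_{m+n,0}C_N$ is used to check consistency and, together with the central parts of the other two families, to pin down $\varphi(C_L)=\lambda C_L$, $\varphi(C_{LN})=\lambda C_{LN}$, $\varphi(C_N)=\lambda C_N$. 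Assembling everything gives $\varphi=\lambda\cdot\mathrm{id}$ for $g=0$ and $\varphi=0$ otherwise, which is the assertion.

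The main obstacle is the bookkeeping at degree $0$: exactly when $m+n+g=0$ the cocycle terms $\tfrac{m^3-m}{12}C_L$, $(m^2-m)C_{LN}$, and $n\delta_{m+n,0}C_N$ enter the identities and spoil the uniformity of the generic recursions, so the equations at those special indices have to be treated separately and shown to be compatible with the $a_n,d_n$ being constant and $b_n,c_n$ vanishing. The other delicate point is ruling out the cross terms $b_n$ and $c_n$, which are a priori possible only because $L_n$ and $N_n$ occupy the same homogeneous component; it is here that the specific relations $[L_m,N_n]=nN_{m+n}$ and $[N_m,N_n]=n\delta_{m+n,0}C_N$ do the decisive work.
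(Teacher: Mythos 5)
Your strategy (grading, annihilator invariance, coefficient recursions) is reasonable, and note that the paper does not actually prove this theorem at all --- it quotes it from Yuan and Hua \cite{yh21} --- so a self-contained computation like yours is being compared against a complete argument elsewhere, not against a proof in this paper. However, there is a genuine error at the step where you dispose of the coefficients $a_n$. The $L_{m+n+g}$-coefficient of the identity on $[L_m,L_n]$ is the pure Witt recursion $2(n-m)a_{m+n}=(n-m-g)a_m+(n-m+g)a_n$, and this recursion does \emph{not} force $a_n=0$ when $g\neq 0$: every constant family $a_n\equiv a_0$ solves it, for every $g$. This is no accident --- the shift maps $L_n\mapsto L_{n+g}$ are honest nontrivial $\frac{1}{2}$-derivations of the Witt algebra, which is exactly why the Witt algebra (unlike the Virasoro algebra) carries nontrivial transposed Poisson structures in \cite{FKL}. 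The Virasoro classification you invoke is a statement about the centrally extended algebra, and its proof depends essentially on the $C_L$-terms; you cannot apply it to a recursion from which you have, by your own declaration, ``suppressed the central corrections.'' Your closing paragraph compounds the problem by casting the cocycle equations as mere consistency checks: for $g\neq 0$ they (or an equivalent coupling) are precisely what kills the shift solutions, and your subsequent treatment of $d_n$ explicitly presupposes that $a_m$ has already vanished, so the circle does not close.

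The gap is fixable inside your own setup, in either of two ways. (i) Keep the central components: writing $\varphi(L_n)=a_nL_{n+g}+b_nN_{n+g}+\delta_{n+g,0}(p_nC_L+q_nC_{LN}+r_nC_N)$ and using $\varphi(C_L)=0$ (your annihilator argument), the $C_L$-coefficient of the identity on $[L_m,L_{-m-g}]$ becomes, once $a_n\equiv a_0$, the equation $(-2m-g)\,p_{-g}=\frac{a_0}{24}\bigl(((m+g)^3-(m+g))+(m^3-m)\bigr)$; the left side is linear in $m$ and the right side is cubic, so $a_0=0$ (and $p_{-g}=0$). (ii) More cheaply, use your own $[L_m,N_n]$ recursion $2nd_{m+n}=na_m+(n+g)d_n$ \emph{before}, not after, settling $a$: with $a_m\equiv a_0$, putting $m=0$ and $n=g$ gives $2gd_g=ga_0+2gd_g$, hence $ga_0=0$, so $a_0=0$ for $g\neq 0$, after which $(n-g)d_n=0$ and the full recursion give $d_n=0$. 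With either repair, the rest of your computation (vanishing of $b_n$ and $c_n$, the $g=0$ case, and the determination of $\varphi$ on the center) goes through and yields the theorem.
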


There is  a $\frac{1}{2}\mathbb{Z}$-grading on $\hat{\mathfrak{so}}$ by
$$\hat{\mathfrak{so}}_0= \langle L_0,M_0, N_0, C_L,C_{LN}, C_N\rangle$$

$$\hat{\mathfrak{so}}_n= \langle L_n,M_n, N_n\rangle,\  n\neq 0, \quad \hat{\mathfrak{so}}_{n+\frac{1}{2}}=\langle  Y_{n+\frac{1}{2}}\rangle,$$
then
\begin{center}
    $\hat{\mathfrak{so}}=\left(\bigoplus_{n\in \mathbb{Z}}\hat{\mathfrak{so}}_n\right)\bigoplus\left(\bigoplus_{n\in \mathbb{Z}}\hat{\mathfrak{so}}_{n+\frac{1}{2}}\right).$
\end{center}
Hence $\Delta(\hat{\mathfrak{so}})$ has a natural $\frac{1}{2}\mathbb{Z}$-grading, i.e.,
\begin{center}$\Delta(\hat{\mathfrak{so}})=\left(\bigoplus_{n\in \mathbb{Z}}\Delta_n(\hat{\mathfrak{so}})\right)\bigoplus\left(\bigoplus_{n\in \mathbb{Z}}\Delta_{n+\frac{1}{2}}(\hat{\mathfrak{so}})\right).$
\end{center}

Now we will compute $\frac{1}{2}$-derivations of the algebra $\hat{\mathfrak{so}}.$

\begin{lemma}\label{even}
$\Delta_0(\hat{\mathfrak{so}})=\langle {\rm Id} \rangle$ and
 $\Delta_{j \in \mathbb{Z}\setminus\{0\}}(\hat{\mathfrak{so}})=0.$

\end{lemma}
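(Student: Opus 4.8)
The plan is to use the $\frac{1}{2}\mathbb{Z}$-grading to reduce to a single homogeneous $\frac{1}{2}$-derivation $\varphi$ of integer degree $j$, and then to exploit the ideal $\mathcal{S}$ together with the identification $\hat{\mathfrak{so}}/\mathcal{S}\cong\mathcal{HV}$, for which \cref{tr} already tells us that every $\frac{1}{2}$-derivation is trivial. Since $\deg\varphi=j\in\mathbb{Z}$, the map sends $L_n,M_n,N_n$ into $\hat{\mathfrak{so}}_{n+j}=\langle L_{n+j},M_{n+j},N_{n+j}\rangle$ (with the degree-$0$ central line adjoined when $n+j=0$) and $Y_{n+\frac{1}{2}}$ into $\langle Y_{n+j+\frac{1}{2}}\rangle$, so I would write $\varphi$ with undetermined scalar coefficients on each basis vector, say $\varphi(L_m)=a_mL_{m+j}+b_mM_{m+j}+c_mN_{m+j}$, $\varphi(N_m)=u_mL_{m+j}+v_mM_{m+j}+w_mN_{m+j}$, $\varphi(M_m)=q_mM_{m+j}+\dots$, $\varphi(Y_{m+\frac{1}{2}})=t_mY_{m+j+\frac{1}{2}}$. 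Since the annihilator (centre) is invariant and lives in degree $0$, for $j\neq0$ one already gets $\varphi(C_L)=\varphi(C_{LN})=\varphi(C_N)=0$.

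First I would show that $\mathcal{S}$ is invariant. By the grading, $\varphi(Y_{n+\frac{1}{2}})\in\langle Y_{n+j+\frac{1}{2}}\rangle\subseteq\mathcal{S}$ for free. Because $\mathcal{S}$ is a subalgebra and each $M_k$ is a linear combination of brackets $[Y_{m+\frac{1}{2}},Y_{n+\frac{1}{2}}]$, the $\frac{1}{2}$-derivation identity yields $\varphi(M_k)\in[\mathcal{S},\mathcal{S}]\subseteq\mathcal{S}$; in particular no $L$, $N$ or central component can occur, including in the edge case $k+j=0$. Hence $\varphi(\mathcal{S})\subseteq\mathcal{S}$ and $\varphi$ descends to a $\frac{1}{2}$-derivation $\bar\varphi$ of $\hat{\mathfrak{so}}/\mathcal{S}\cong\mathcal{HV}$. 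By \cref{tr} and the grading decomposition of $\Delta(\mathcal{HV})$, the map $\bar\varphi$ equals $\lambda\,\mathrm{Id}$ for some scalar $\lambda$; since $\mathrm{Id}$ has degree $0$, this forces $\lambda=0$ whenever $j\neq0$. Reading this back through the quotient map gives, for $j\neq0$, that $\varphi(L_m),\varphi(N_m)$ and the central images all lie in $\mathcal{S}$ (so $a_m=c_m=u_m=w_m=0$ and $\varphi(L_m)=b_mM_{m+j}$, $\varphi(N_m)=v_mM_{m+j}$), while for $j=0$ it gives $\varphi(L_m)=\lambda L_m+b_mM_m$, $\varphi(N_m)=\lambda N_m+v_mM_m$ and $\varphi(C_\bullet)=\lambda C_\bullet$ (the latter being exact by centre-invariance), with the $M$-valued coefficients $b_m,v_m,q_m,t_m$ still free.

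It then remains to pin down these surviving coefficients by feeding a short list of brackets into the $\frac{1}{2}$-derivation identity and comparing components. For $j\neq0$: the relation $[N_m,Y_{n+\frac{1}{2}}]=Y_{m+n+\frac{1}{2}}$ gives $t_{m+n}=\tfrac{1}{2}t_n$, so $t\equiv0$ (take $m=0$); then $[Y_{m+\frac{1}{2}},Y_{n+\frac{1}{2}}]=(m-n)M_{m+n+1}$ gives $q\equiv0$; the relation $[N_m,N_n]=n\delta_{m+n,0}C_N$ forces $v_m$ to be a constant $v$; and finally $[L_m,N_n]=nN_{m+n}+\delta_{m+n,0}(m^2-m)C_{LN}$ yields $b_m=v\,\tfrac{j-n}{2}$, whose right-hand side still contains the free index $n$, which is consistent only if $v=0$, whence $b\equiv0$ and $\varphi=0$. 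For $j=0$: the same $[N,Y]$ and $[Y,Y]$ relations give $t_m=\lambda=q_m$; then $[L_m,L_n]=(n-m)L_{m+n}+\delta_{m+n,0}\tfrac{m^3-m}{12}C_L$ forces $b\equiv0$, and $[L_m,N_n]$ (with $b=0$) forces $v=0$. This leaves $\varphi=\lambda\,\mathrm{Id}$, giving $\Delta_0(\hat{\mathfrak{so}})=\langle\mathrm{Id}\rangle$ and $\Delta_j(\hat{\mathfrak{so}})=0$ for $j\in\mathbb{Z}\setminus\{0\}$.

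I expect the main obstacle to be the last step, namely controlling the $M$-valued parts $b_m,v_m$ of $\varphi(L_m),\varphi(N_m)$: these are precisely the components invisible to the quotient argument, since they map into the kernel $\mathcal{S}$. The decisive mechanism is that the mixed relation $[L_m,N_n]$ is over-determined—its output carries a free index that cannot appear on the right unless the relevant coefficient vanishes—which is what collapses $v$, and then $b$, to zero. The only other delicate bookkeeping is tracking the degree-$0$ and central edge cases arising when $n+j=0$, all of which are absorbed either by the invariance of $\mathcal{S}$ or by the invariance of the centre.
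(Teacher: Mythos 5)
Your proof is correct. It shares the paper's skeleton---the $\frac{1}{2}\mathbb{Z}$-grading plus Theorem~\ref{tr} to control the $L$- and $N$-directions, followed by linear equations read off from the structure constants---but the way Theorem~\ref{tr} enters is genuinely different. The paper directly writes the ansatz $\varphi_j(L_n)=\delta_{j,0}\lambda L_n+a_{j,n}M_{n+j}$, $\varphi_j(N_n)=\delta_{j,0}\lambda N_n+b_{j,n}M_{n+j}$ ``by Theorem~\ref{tr} and (\ref{der11})'' (implicitly: projecting $\varphi$ restricted to $\mathcal{HV}$ along the ideal $\mathcal{S}$ yields a $\frac{1}{2}$-derivation of $\mathcal{HV}$), while still allowing $\varphi_j(M_n)$ to carry $L$-, $N$- and central components, which it must then eliminate through the $[N_m,M_n]$ relations, namely equations (\ref{der13}), (\ref{der23}) and (\ref{der33}). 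You instead first prove $\varphi(\mathcal{S})\subseteq\mathcal{S}$ (free for the $Y$'s by the grading; for the $M$'s by writing each $M_k$ as a bracket of two $Y$'s, which simultaneously gives $\varphi(M_k)\in\langle M_{k+j}\rangle$ with no $L$-, $N$- or central part even when $k+j=0$), and then pass to the quotient $\hat{\mathfrak{so}}/\mathcal{S}\cong\mathcal{HV}$. This buys two things: the reduction the paper leaves implicit becomes a rigorous quotient argument, and the paper's unknowns $c^1_{j,n},c^2_{j,n},c^3_{j,n},c^i_n$ never arise, so the $[N,M]$ computations are not needed at all. Your remaining eliminations also run through different brackets---$[N_m,Y_{n+\frac{1}{2}}]$ to kill the $Y$-coefficients, $[N_m,N_n]$ to make $v_m$ constant, then the over-determined $[L_m,N_n]$ relation to collapse $v$ and $b$---whereas the paper works with $[L,L]$, $[L,N]$ and $[Y,Y]$ directly; I checked your coefficient equations ($t_{m+n}=\tfrac{1}{2}t_n$, $v_n=v_m$, $b_m=\tfrac{v(j-n)}{2}$, and their $j=0$ analogues) and they are all correct, so both routes land on $\Delta_0(\hat{\mathfrak{so}})=\langle \mathrm{Id}\rangle$ and $\Delta_j(\hat{\mathfrak{so}})=0$ for $j\neq 0$.
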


\begin{proof}
Suppose that $\varphi_j\in \Delta_j(\hat{\mathfrak{so}})$    be a homogeneous $\frac{1}{2}$-derivation. In this case, we have
\begin{equation}\label{der11}
\varphi_j(\hat{\mathfrak{so}}_n)\subseteq \hat{\mathfrak{so}}_{n+j}, \quad \varphi_j (\hat{\mathfrak{so}}_{n+\frac{1}{2}})\subseteq \hat{\mathfrak{so}}_{n+j+\frac{1}{2}}.
\end{equation}

\noindent By Theorem \ref{tr} and the relation (\ref{der11}) we can get
\begin{longtable}{lll}
  $\varphi_j(L_n)=\delta_{j,0}\lambda L_{n}+a_{j,n}M_{n+j}, $& & $\varphi_j(N_n)=\delta_{j,0}\lambda N_{n}+b_{j,n}M_{n+j}, $\\
\multicolumn{3}{l}{
$\varphi_j(M_n)=c_{j,n}^1 L_{n+j}+c_{j,n}^2M_{n+j}+c_{j,n}^3 N_{n+j}+\delta_{n+j,0}(c_n^1C_L+c_n^2C_{LN}+c_n^3C_{N}),$} \\
$\varphi_j(C_L)=\delta_{j,0}\lambda C_{L},$ & & $\varphi_j(C_{LN})=\delta_{j,0}\lambda C_{LN},$ \\
$ \varphi_j(Y_{n+\frac{1}{2}})=d_{j,n} Y_{n+j+\frac{1}{2}},$& &
$\varphi_j(C_N)=\delta_{j,0}\lambda C_{N},$\\
\end{longtable}
for  all  $n\in \mathbb{Z},$ and for some $\lambda \in \mathbb{C}.$

\begin{enumerate}
    \item[(1)] If $j\neq0,$ then applying $\varphi_j$ to both side of $[L_{m},L_{n}]=(n-m)L_{m+n}+\delta_{m+n,0}\frac{m^3-m}{12}C_L,$ we obtain
\begin{equation}\label{der2}
2(n-m)a_{j,m+n}=(n+j)a_{j,n}-(m+j)a_{j,m}.
\end{equation}
\noindent
Setting $n=0$ in (\ref{der2}), we get $(j-m)a_{j,m}=ja_{j,0}.$ Then taking $m=j$ in this equation, we derive $a_{j,0}=0.$  Consequently, we  have  $a_{j,m}=0$ for $m\neq j.$ Letting $m=j$ and $n=-j$ in (\ref{der2}), we obtain $a_{j,j}=0$, this shows   $a_{j,m}=0$  for all $m\in\mathbb{Z}.$
Now applying $\varphi_j$ to both side of $[L_m,N_n]=nN_{m+n}+\delta_{m+n,0}(m^2-m)C_{LN},$ we obtain
\begin{equation}\label{der3}
2nb_{j,m+n}=(n+j)b_{j,n}.
\end{equation}
\noindent
Setting $n=0$ in (\ref{der3}), we get $b_{j,0}=0.$ Then taking $m=-n$ in this equation, we derive $b_{j,n}=0$  for $n\neq -j.$  Letting $m=-2j$ and $n=j$ in (\ref{der3}), this implies $b_{j,-j}=0$, this shows   $b_{j,n}=0$  for all $n\in\mathbb{Z}.$ Next applying $\varphi_j$ to both side of $[N_{m},M_{n}]=2M_{m+n},$ it gives
\begin{equation}\label{der13}
c_{j,m+n}^1=0, \ 2c_{j,m+n}^2=c_{j,n}^2,\
4c_{j,m+n}^3=-mc_{j,n}^1,
\end{equation}
\begin{equation}\label{der23}
\delta_{m+n+j,0}\left(4c_{m+n}^1C_L+(4c_{m+n}^2+((n+j)^2-(n+j))c_{j,n}^1)C_{LN}+(4c_{m+n}^3-(n+j)c_{j,n}^3)C_{N}\right)=0
\end{equation}
Taking $m=0$ in (\ref{der13}), we obtain $c_{j,n}^1=c_{j,n}^2=c_{j,n}^3=0$
for all $n\in\mathbb{Z}.$ Setting $m=0,$ $n=-j$ in (\ref{der23})
it gives $c_{-j}^1=c_{-j}^2=c_{-j}^3=0.$ Then applying
 $\varphi_j$ to both side of the multiplication $[Y_{m+\frac{1}{2}},Y_{n+\frac{1}{2}}]=(m-n)M_{m+n+1},$ we have $$(m-n+j)d_{j,m}+(m-n-j)d_{j,n}=0.$$ Putting $n=m-j$ in this equation, we deduce $d_{j,m}=0$  for all $m\in\mathbb{Z}.$
It  proves $\varphi_{j}=0$ for $j\in{\mathbb{Z}\setminus\{0\}}.$


  \item[(2)] If $j=0,$ then applying $\varphi_0$ to both side of $[L_{m},L_{n}]=(n-m)L_{m+n}+\delta_{m+n,0}\frac{m^3-m}{12}C_L,$ we obtain (\ref{der2}) for $j=0.$
Taking $m=0$ in (\ref{der2}), it gives $a_{0,n}=0$ for $n\neq 0.$ Setting $m=-n\neq 0$ in (\ref{der2}), we have $a_{0,0}=0.$ Now applying $\varphi_0$ to both side of $[L_m,N_n]=nN_{m+n}+\delta_{m+n,0}(m^2-m)C_{LN},$ we obtain (\ref{der3}) for $j=0.$
Setting $m=0$ in (\ref{der3}), we get $b_{0,n}=0$ for $n\neq0.$ Then taking $m=-n\neq0$ in this equation, we derive $b_{0,0}=0.$
Next applying $\varphi_0$ to both side of $[N_{m},M_{n}]=2M_{m+n},$ it gives
\begin{equation}\label{der33}
c_{0,m+n}^1=0, \ 2c_{0,m+n}^2=c_{0,n}^2+\lambda,\
4c_{0,m+n}^3=-mc_{0,n}^1,
\end{equation}
and the relation (\ref{der23}) for $j=0.$ Taking $m=0$ in
(\ref{der33}), this implies $c_{0,n}^1=c_{0,n}^3=0$  and $c_{0,n}^2=\lambda$
for all $n\in\mathbb{Z}.$  Setting $m=-n$ in (\ref{der23})
it gives $c_{0}^1=c_{0}^2=c_{0}^3=0.$
Similarly, applying
 $\varphi_0$ to both side of the multiplication $[Y_{m+\frac{1}{2}},Y_{n+\frac{1}{2}}]=(m-n)M_{m+n+1},$ we have $$(m-n)(d_{0,m}+d_{0,n})=2(m-n)\lambda.$$
 Putting $m=0$ in this equation, we deduce $d_{0,n}=2\lambda-d_{0,0}$  for all $n\in\mathbb{Z}.$ So we can get  $d_{0,n}=d_{0,0}=\lambda$ for $n\in\mathbb{Z}.$
Hence $\varphi_0=\lambda \ {\rm Id}.$


\end{enumerate}
\end{proof}
\begin{lemma}\label{lemm10}
$\Delta_{j+\frac{1}{2}}(\hat{\mathfrak{so}})=0.$
\end{lemma}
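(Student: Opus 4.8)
The plan is to prove $\Delta_{j+\frac{1}{2}}(\hat{\mathfrak{so}})=0$ by the same mechanism as in Lemma \ref{even}: take a homogeneous $\frac{1}{2}$-derivation $\varphi=\varphi_{j+\frac{1}{2}}$ of half-integer degree and show it kills every basis element. The first step is to record the general form forced by the grading \eqref{der11}. Since $\varphi$ shifts the grading by $j+\frac{1}{2}$, it must send each integer-degree generator into the half-integer part and each half-integer generator into the integer part. Because $\hat{\mathfrak{so}}_{n+\frac{1}{2}}=\langle Y_{n+\frac{1}{2}}\rangle$ is one-dimensional, the only possible images are
\begin{longtable}{lll}
$\varphi(L_n)=p_{n}Y_{n+j+\frac{1}{2}},$ & $\varphi(N_n)=q_{n}Y_{n+j+\frac{1}{2}},$ & $\varphi(M_n)=r_{n}Y_{n+j+\frac{1}{2}},$\\
\end{longtable}
while $\varphi(Y_{n+\frac{1}{2}})$ lands in $\langle L_{n+j},M_{n+j},N_{n+j}\rangle$ (plus possible central terms when $n+j=0$), and the central elements $C_L,C_{LN},C_N$ must map to $0$ since there is no half-integer piece in $\hat{\mathfrak{so}}_0$ to receive them under degree $j+\tfrac12$. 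Writing $\varphi(Y_{n+\frac{1}{2}})=u_{n}L_{n+j}+v_{n}M_{n+j}+w_{n}N_{n+j}+(\text{central if }n+j=0)$ sets up the unknowns.

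Next I would feed $\varphi$ through the defining brackets one family at a time, exactly as in the integer case. Applying $\varphi$ to $[L_m,L_n]=(n-m)L_{m+n}+\dots$ relates the $p$-coefficients to the $u,w$-coefficients (the right-hand side now contains $Y$-terms coming from $[\varphi(L_m),L_n]$ and $[L_m,\varphi(L_n)]$ via the $[L_m,Y]$ action); the bracket $[Y_{m+\frac{1}{2}},Y_{n+\frac{1}{2}}]=(m-n)M_{m+n+1}$ gives a relation purely among the $Y$-image coefficients $u,v,w$ and the $r$-coefficients; the bracket $[N_m,Y_{n+\frac{1}{2}}]=Y_{m+n+\frac{1}{2}}$ and $[L_m,Y_{n+\frac{1}{2}}]$ link $q,u,v,w$; and $[N_m,M_n]=2M_{m+n}$, $[L_m,M_n]=nM_{m+n}$ constrain $r$. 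Each such identity is a recurrence in the integer parameter, and the strategy is to specialize $m$ or $n$ cleverly (set one index to $0$, then exploit the $j$-shifted index to eliminate the single surviving coefficient) to cascade every coefficient to zero, precisely the substitution trick used repeatedly in Lemma \ref{even}.

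The step I expect to be the main obstacle is handling the coupling between the $Y$-images $u_n,v_n,w_n$ and the central coefficients in the resonant case $n+j=0$: the relation from $[L_m,L_n]$ together with the one from $[N_m,Y]$ must be combined carefully to rule out a ``Virasoro-like'' nontrivial solution among the $u$'s before one can conclude $p_n=0$. Concretely, after deriving the $Y$-coefficient recurrence analogous to \eqref{der2}, I would specialize to force $u_n=0$ for all $n$, then return to the $[L_m,Y]$ and $[N_m,Y]$ relations to kill $v_n,w_n$ and the central terms, and finally back-substitute to conclude $p_n=q_n=r_n=0$. Once all coefficients vanish, $\varphi=0$, establishing $\Delta_{j+\frac{1}{2}}(\hat{\mathfrak{so}})=0$. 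Combining this with Lemma \ref{even} then yields that $\Delta(\hat{\mathfrak{so}})=\langle\mathrm{Id}\rangle$, so $\hat{\mathfrak{so}}$ has only trivial $\frac{1}{2}$-derivations and hence only trivial transposed Poisson structures.
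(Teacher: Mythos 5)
Your plan is correct, and it takes a genuinely different route from the paper. The paper never redoes the bracket-by-bracket computation for the half-integer component: it observes that $[\varphi,\mathrm{ad}_{Y_{n+\frac{1}{2}}}]$ is again a $\frac{1}{2}$-derivation, and since both $\varphi$ and $\mathrm{ad}_{Y_{n+\frac{1}{2}}}$ swap the two parts of the $\mathbb{Z}_2$-grading, the commutator preserves them; by Lemma \ref{even} it must therefore be $\alpha_n\,\mathrm{Id}$. Evaluating this commutator on $M_m$ forces $\gamma_{j,m}=0$ and $\alpha_n=0$, evaluating on $N_m$ then kills $\sigma_{j,m},\mu_{j,m},\tau_{j,m},\beta_{j,m}$, and evaluating on $L_m$ kills $\alpha_{j,m}$ --- a three-step proof bought by recycling Lemma \ref{even}. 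Your direct cascade also closes, and in roughly this order: $[N_m,M_n]=2M_{m+n}$ gives $4r_{m+n}=r_n$, hence $r\equiv 0$; comparing $L$-, $N$- and central coefficients in $\varphi([N_m,Y_{n+\frac{1}{2}}])$ forces $u\equiv w\equiv 0$ and kills the resonant central terms of $\varphi(Y)$; $[L_m,L_n]$ with $m=0$ gives $(2n-2j-1)p_n=(n-2j-1)p_0$, and reinserting this into the general $[L,L]$ relation kills $p_0$; $[L_m,N_n]$ (after $p\equiv 0$) gives $(2n-2j-1)q_n=0$; finally $[L_m,Y_{n+\frac{1}{2}}]$ eliminates the remaining constant $v$. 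Note that the factor $2n-2j-1$ is an odd integer, hence never zero: this parity obstruction is the real engine of the half-integer case and deserves to be stated explicitly rather than left inside ``specialize cleverly.'' What the paper's route buys is brevity and a structural explanation (no grading-preserving $\frac{1}{2}$-derivations beyond scalars exist, so none can arise as such commutators); what yours buys is independence from that trick, and incidentally more care than the paper itself, since you allow central terms in $\varphi(Y_{n+\frac{1}{2}})$ in the resonant degree, which the paper's assumed form silently omits.

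Three small corrections before your sketch could be executed. First, the indices: since $\deg\varphi=j+\frac{1}{2}$, the image $\varphi(Y_{n+\frac{1}{2}})$ lies in $\hat{\mathfrak{so}}_{n+j+1}$, so it should read $u_nL_{n+j+1}+v_nM_{n+j+1}+w_nN_{n+j+1}$ with central terms possible only when $n+j+1=0$ (not $n+j=0$). Second, your justification that $C_L,C_{LN},C_N$ map to zero is garbled: the component $\hat{\mathfrak{so}}_{j+\frac{1}{2}}=\langle Y_{j+\frac{1}{2}}\rangle$ is nonzero, so there \emph{is} a half-integer piece to receive them. The correct argument is that $\operatorname{Ann}(\hat{\mathfrak{so}})$ is invariant under any $\frac{1}{2}$-derivation (as recalled in the Preliminaries) and is concentrated in degree $0$, so its image under a map of degree $j+\frac{1}{2}$ lies in $\operatorname{Ann}(\hat{\mathfrak{so}})\cap\hat{\mathfrak{so}}_{j+\frac{1}{2}}=0$. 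Third, applying $\varphi$ to $[L_m,L_n]$ produces a relation among the $p$-coefficients alone (all terms are $Y$-terms with $p$'s), not a coupling of $p$ with $u,w$ as you claim; the $u,w$ enter only through the brackets involving $Y$. None of these affects the viability of the approach.
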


\begin{proof}
Let $\varphi_{j+\frac{1}{2}}\in\Delta_{j+\frac{1}{2}}(\hat{\mathfrak{so}})$  be a homogeneous $\frac{1}{2}$-derivation. Then we have
\begin{equation}\label{derj3}
\varphi_{j+\frac{1}{2}}(\hat{\mathfrak{so}}_n)\subseteq \hat{\mathfrak{so}}_{n+j+\frac{1}{2}}, \quad \varphi_{j+\frac{1}{2}} (\hat{\mathfrak{so}}_{n+\frac{1}{2}})\subseteq \hat{\mathfrak{so}}_{n+j+1}.
\end{equation}
\noindent
By (\ref{derj3}) we can assume that
\begin{center}$\varphi_{j+\frac{1}{2}}(L_n)=\alpha_{j,n}Y_{n+j+\frac{1}{2}}, \quad \varphi_{j+\frac{1}{2}}(N_n)=\beta_{j,n}Y_{n+j+\frac{1}{2}}, \quad
\varphi_{j+\frac{1}{2}}(M_n)=\gamma_{j,n}Y_{n+j+\frac{1}{2}},$\end{center}
$$\varphi_{j+\frac{1}{2}}(Y_{n+\frac{1}{2}})=\sigma_{j,n}L_{n+j+1}+\mu_{j,n}N_{n+j+1}+\tau_{j,n}M_{n+j+1}.$$
where $\alpha_{j,n},  \ \beta_{j,n},\  \gamma_{j,n}, \ \sigma_{j,n}, \ \mu_{j,n},\  \tau_{j,n} \in \mathbb{C}.$

Let us say $\varphi=\varphi_{j+\frac{1}{2}}.$
The algebra $\hat{\mathfrak{so}}$ admits a
$\mathbb{Z}_2$-grading.
Namely,
$$\hat{\mathfrak{so}}_{\overline{0}}=\langle \hat{\mathfrak{so}}_j\rangle_{j \in \mathbb Z} \quad \text{and} \quad \hat{\mathfrak{so}}_{\overline{1}}=\langle \hat{\mathfrak{so}}_{j+\frac{1}{2}}\rangle_{j \in \mathbb Z}.$$

The mapping $\varphi$ changes the grading components.
It is known, that the commutator of one derivation and one $\frac{1}{2}$-derivation gives a new $\frac{1}{2}$-derivation.
Hence,
$[\varphi, {\rm ad}_{Y_{n+\frac{1}{2}}}]$ is a $\frac{1}{2}$-derivation which preserve the grading components.
Namely, it is a $\frac 12$-derivation, described in Lemma  \ref{even},
i.e., $[\varphi, {\rm ad}_{Y_{n+\frac{1}{2}}}]= \alpha_n {\rm Id}.$

It is easy to see, that
$$\alpha_n M_m =
[\varphi, {\rm ad}_{Y_{n+\frac{1}{2}}}](M_m)=
\varphi([M_m, Y_{n+\frac{1}{2}}])-[\varphi(M_m), Y_{n+\frac{1}{2}}]$$
$$=-\gamma_{j,m}[Y_{m+j+\frac{1}{2}}, Y_{n+\frac{1}{2}}]=
-(m+j-n)\gamma_{j,m} M_{m+n+j+1}.$$

For fixed elements $m$ and $j$, we
can choose an element $n,$ such that $ n\neq m+j$ and $n\neq -1-j.$
The next observations give $\gamma_{j,m}=0$ for each $(j,m) \in \mathbb Z \times \mathbb Z.$ Hence,  $\alpha_n=0$ for all $n \in \mathbb Z.$ Then we consider
$$0=
[\varphi, {\rm ad}_{Y_{n+\frac{1}{2}}}](N_m)=
\varphi([N_m, Y_{n+\frac{1}{2}}])-[\varphi(N_m), Y_{n+\frac{1}{2}}]$$
$$=\varphi(Y_{m+n+\frac{1}{2}})-\beta_{j,m}[Y_{m+j+\frac{1}{2}},Y_{n+\frac{1}{2}}]=
\sigma_{j,m+n}L_{m+n+j+1}+\mu_{j,m+n}N_{m+n+j+1}$$
$$+\tau_{j,m+n}M_{m+n+j+1}-\beta_{j,m}(m-n+j)M_{m+n+j+1}.$$

From this we have $\sigma_{j,m}=\mu_{j,m}=0$ and
\begin{equation}\label{der43}\tau_{j,m+n}-\beta_{j,m}(m-n+j)=0
\end{equation}
 for all $m \in \mathbb Z.$ Taking $m=0$ in (\ref{der43}), then  setting $n=0$  we can get
$$\beta_{j,m}=\frac{j-m}{j+m}\beta_{j,0}, \quad m\neq -j.$$
By the relation (\ref{der43}) we obtain $\beta_{j,0}=0,$ it gives
$\tau_{j,m}=\beta_{j,m}=0$ for each $(j,m) \in \mathbb Z \times \mathbb Z.$

$$0=
[\varphi, {\rm ad}_{Y_{n+\frac{1}{2}}}](L_m)=
\varphi([L_m, Y_{n+\frac{1}{2}}])-[\varphi(L_m), Y_{n+\frac{1}{2}}]$$
$$=(n+\frac{1-m}{2})\varphi(Y_{m+n+\frac{1}{2}})-\alpha_{j,m}[Y_{m+j+\frac{1}{2}},Y_{n+\frac{1}{2}}]=-\alpha_{j,m}(m-n+j)M_{m+n+j+1},$$
we can choose an element $n,$ such that $ n\neq m+j.$ It shows that $\alpha_{j,m}=0$ for each $(j,m) \in \mathbb Z \times \mathbb Z,$ summarizing, $\varphi=0.$
 \end{proof}

Summarizing the results from lemmas \ref{even} and \ref{lemm10}, we conclude that $\hat{\mathfrak{so}}$  does not have nontrivial $\frac{1}{2}$-derivations.
\begin{theorem}
 $\hat{\mathfrak{so}}$ has no nontrivial  $\frac{1}{2}$-derivations.
\end{theorem}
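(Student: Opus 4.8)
The plan is to exploit the $\frac{1}{2}\mathbb{Z}$-grading on $\hat{\mathfrak{so}}$ together with the fact, recorded in the grading lemma of the preliminaries, that the space of $\frac{1}{2}$-derivations of a $G$-graded Lie algebra decomposes as the direct sum of its homogeneous components. Concretely, I would first invoke that lemma for the abelian group $G=\frac{1}{2}\mathbb{Z}$ to write
$$
\Delta(\hat{\mathfrak{so}})=\left(\bigoplus_{n\in\mathbb{Z}}\Delta_n(\hat{\mathfrak{so}})\right)\bigoplus\left(\bigoplus_{n\in\mathbb{Z}}\Delta_{n+\frac{1}{2}}(\hat{\mathfrak{so}})\right),
$$
so that it suffices to identify each homogeneous summand separately. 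This reduces the global statement to the two computations already carried out.

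Next I would treat the integer-degree and half-integer-degree parts in turn. For an integer degree $j\in\mathbb{Z}$, Lemma \ref{even} gives $\Delta_0(\hat{\mathfrak{so}})=\langle\mathrm{Id}\rangle$ and $\Delta_j(\hat{\mathfrak{so}})=0$ for every $j\neq 0$. For a half-integer degree $j+\frac{1}{2}$, Lemma \ref{lemm10} gives $\Delta_{j+\frac{1}{2}}(\hat{\mathfrak{so}})=0$ for all $j\in\mathbb{Z}$. Substituting these into the decomposition above, the only nonzero homogeneous summand is the degree-zero one, and it is one-dimensional, spanned by $\mathrm{Id}$.

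Consequently, every $\frac{1}{2}$-derivation $\varphi$ of $\hat{\mathfrak{so}}$ is of the form $\varphi=\lambda\,\mathrm{Id}$ for some $\lambda\in\mathbb{C}$, that is, multiplication by a scalar from the ground field. By the definition of trivial $\frac{1}{2}$-derivations recalled in the preliminaries, each such $\varphi$ is trivial, and therefore $\hat{\mathfrak{so}}$ has no nontrivial $\frac{1}{2}$-derivations, as claimed.

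Since the genuine computational content is entirely absorbed into Lemmas \ref{even} and \ref{lemm10}, there is no real obstacle remaining at the level of the theorem itself. The only point requiring care is verifying that the passage from homogeneous components back to arbitrary $\frac{1}{2}$-derivations is legitimate, namely that $\Delta(\hat{\mathfrak{so}})$ really is the direct sum of its graded pieces; but this is precisely the grading lemma, applied to $G=\frac{1}{2}\mathbb{Z}$, so the argument closes immediately.
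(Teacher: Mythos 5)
Your proposal is correct and follows exactly the paper's argument: the paper likewise obtains the theorem by combining the $\frac{1}{2}\mathbb{Z}$-grading decomposition $\Delta(\hat{\mathfrak{so}})=\left(\bigoplus_{n\in\mathbb{Z}}\Delta_n(\hat{\mathfrak{so}})\right)\bigoplus\left(\bigoplus_{n\in\mathbb{Z}}\Delta_{n+\frac{1}{2}}(\hat{\mathfrak{so}})\right)$ with Lemmas \ref{even} and \ref{lemm10}, concluding that every $\frac{1}{2}$-derivation is a scalar multiple of $\mathrm{Id}$ and hence trivial. No gaps; the computational content is indeed entirely contained in those two lemmas.
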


\begin{corollary}
 $\hat{\mathfrak{so}}$ has no nontrivial  transposed Poisson algebra structures.
\end{corollary}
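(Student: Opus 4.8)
The plan is to deduce the corollary directly from the theorem just proved, together with the general principle relating $\frac{1}{2}$-derivations to transposed Poisson structures. The essential content—that $\hat{\mathfrak{so}}$ admits no nontrivial $\frac{1}{2}$-derivations—has already been established in Lemmas \ref{even} and \ref{lemm10} via the grading decomposition $\Delta(\hat{\mathfrak{so}})=\bigoplus_{g}\Delta_g(\hat{\mathfrak{so}})$, so the remaining argument is purely formal.

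Concretely, I would proceed as follows. Suppose $\cdot$ is an arbitrary transposed Poisson structure on $(\hat{\mathfrak{so}},[\cdot,\cdot])$. By Lemma \ref{l01}, for every $z\in\hat{\mathfrak{so}}$ the left multiplication operator $L_z\colon x\mapsto z\cdot x$ is a $\frac{1}{2}$-derivation of the Lie algebra $\hat{\mathfrak{so}}$. The preceding theorem asserts that $\hat{\mathfrak{so}}$ has no nontrivial $\frac{1}{2}$-derivations, so each $L_z$ must be trivial, that is, $L_z=\lambda_z\,\mathrm{Id}$ for some scalar $\lambda_z\in\mathbb{C}$, whence $z\cdot x=\lambda_z x$ for all $x\in\hat{\mathfrak{so}}$.

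It remains to observe that this forces the product $\cdot$ to vanish. Using commutativity of $\cdot$, for any $x,z\in\hat{\mathfrak{so}}$ we have $\lambda_z x=z\cdot x=x\cdot z=\lambda_x z$. Choosing $x$ and $z$ linearly independent (which is possible since $\dim\hat{\mathfrak{so}}>1$) yields $\lambda_x=\lambda_z=0$; hence $z\cdot x=0$ identically, and the transposed Poisson structure is trivial. This is precisely the assertion of the cited lemma from \cite{FKL}, which I would simply invoke to close the argument. I anticipate no genuine obstacle at this stage: all the difficulty resides in the explicit computation of the homogeneous pieces $\Delta_j(\hat{\mathfrak{so}})$ and $\Delta_{j+\frac{1}{2}}(\hat{\mathfrak{so}})$ carried out in the two lemmas above, and the corollary follows in one line once the theorem is in hand.
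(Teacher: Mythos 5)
Your proposal is correct and follows the same route as the paper: the corollary is an immediate consequence of the theorem that $\hat{\mathfrak{so}}$ has no nontrivial $\frac{1}{2}$-derivations together with the cited lemma from \cite{FKL} (via Lemma \ref{l01}, every left multiplication of a transposed Poisson structure is a $\frac{1}{2}$-derivation). Your extra step of unpacking the \cite{FKL} lemma --- using commutativity and a pair of linearly independent elements to force all the scalars $\lambda_z$ to vanish --- is a correct reconstruction of that lemma's proof, but it does not change the argument in substance.
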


\section{Transposed Poisson   structures on original deformative  Schr\"{o}dinger-Virasoro algebras}\ \ \ \ \ \ \ \ \

The infinite-dimensional  original deformative  Schr\"{o}dinger-Witt  algebras were considered in the  paper \cite{lsz} and  denoted by $L_{\lambda,\mu}$ $(\lambda,\mu\in \mathbb{C})$, possess   the basis
$\{L_n, M_n, Y_{n+\frac{1}{2}} \ | \ n\in\mathbb{Z} \}$
with the following non-vanishing  Lie brackets:
\begin{longtable}{lllll}
     $[L_m,L_n]=(n-m)L_{m+n},$&$ [L_m,M_n]=(n-\lambda m+2\mu)M_{m+n},$\\
     $ [L_m,Y_{n+\frac{1}{2}}]=(n+\frac{1}{2}-\frac{\lambda+1}{2}m+\mu)Y_{m+n+\frac{1}{2}}, $& $[Y_{m+\frac{1}{2}},Y_{n+\frac{1}{2}}]=(n-m)M_{m+n+1}.$\\
     \end{longtable}

From \cite{lsz} it is known that the original deformative Schr\"{o}dinger-Witt  algebras have central extensions and it can be formulated follows:

\begin{scriptsize}

\[
\begin{array}{ll}
      \widetilde{L_{\lambda,\mu}}^1  : \left\{ \begin{array}{l}
         [L_m,L_n]=(n-m)L_{m+n}+\frac{m^3-m}{12}\delta_{m+n,0}C_L, \\

     [L_m,Y_{n+\frac{1}{2}}]=(n+\frac{1}{2}-\frac{\lambda+1}{2}m+\mu)Y_{m+n+\frac{1}{2}},\\

          [L_m,M_n]=(n-\lambda m+2\mu)M_{m+n,}  \\

          [Y_{m+\frac{1}{2}},Y_{n+\frac{1}{2}}]=(n-m)M_{m+n+1},
    \end{array} \right.
    &

    \widetilde{L_{\lambda,\mu}}^2 : \left\{ \begin{array}{l}
         [L_m,L_n]=(n-m)L_{m+n}+\frac{m^3-m}{12}\delta_{m+n,0}C_L, \\

      [L_m,Y_{n+\frac{1}{2}}]=(n+\frac{1}{2}-\frac{\lambda+1}{2}m+\mu)Y_{m+n+\frac{1}{2}}+\\
      \delta_{m+n+\mu+\frac{1}{2},0}C_{LY},  \\

          [L_m,M_n]=(n-\lambda m+2\mu)M_{m+n,}  \\

          [Y_{m+\frac{1}{2}},Y_{n+\frac{1}{2}}]=(n-m)M_{m+n+1},

    \end{array} \right.

    \\

\end{array}
\]

\[
\begin{array}{ll}
     \widetilde{L_{\lambda,\mu}}^3 : \left\{ \begin{array}{l}
        [L_m,L_n]=(n-m)L_{m+n}+\frac{m^3-m}{12}\delta_{m+n,0}C_L, \\

     [L_m,Y_{n+\frac{1}{2}}]=(n+\frac{1}{2}-\frac{\lambda+1}{2}m+\mu)Y_{m+n+\frac{1}{2}}+\\ \frac{m(m-1)}{2}\delta_{m+n+\mu+\frac{1}{2},0}C_{LY},\\

          [L_m,M_n]=(n-\lambda m+2\mu)M_{m+n,}  \\

          [Y_{m+\frac{1}{2}},Y_{n+\frac{1}{2}}]=(n-m)M_{m+n+1},\\

          [M_m,Y_{n+\frac{1}{2}}]=\delta_{m+n+3\mu+\frac{1}{2},0}C_{MY},
    \end{array} \right.
    &

    \widetilde{L_{\lambda,\mu}}^4 : \left\{\begin{array}{l}
         [L_m,L_n]=(n-m)L_{m+n}+\frac{m^3-m}{12}\delta_{m+n,0}C_L, \\

    [L_m,Y_{n+\frac{1}{2}}]=(n+\frac{1}{2}-\frac{\lambda+1}{2}m+\mu)Y_{m+n+\frac{1}{2}}-\\
    m(m^2-1)\delta_{m+n+\mu+\frac{1}{2},0}C_{LY},\\

          [L_m,M_n]=(n-\lambda m+2\mu)M_{m+n}-\\
          m(m^2-1)\delta_{m+n+2\mu,0}C_{M},  \\

          [Y_{m+\frac{1}{2}},Y_{n+\frac{1}{2}}]=(n-m)M_{m+n+1}-\\
          (m+\mu)((m+\mu)^2-1)\delta_{m+n+2\mu,0}C_{M},
    \end{array} \right.
\end{array}
\]
\[
\begin{array}{ll}

    \widetilde{L_{\lambda,\mu}}^5 : \left\{ \begin{array}{l}
         [L_m,L_n]=(n-m)L_{m+n}+\frac{m^3-m}{12}\delta_{m+n,0}C_L, \\

     [L_m,Y_{n+\frac{1}{2}}]=(n+\frac{1}{2}-\frac{\lambda+1}{2}m+\mu)Y_{m+n+\frac{1}{2}},\\

          [L_m,M_n]=(n-\lambda m+2\mu)M_{m+n},  \\

         [Y_{m+\frac{1}{2}},Y_{n+\frac{1}{2}}]=(n-m)M_{m+n+1}-(m+\mu+\frac{1}{2})\delta_{m+n+2\mu+1,0}C_{Y},
    \end{array} \right.
 &

\end{array}
\]
\end{scriptsize}
where

$\widetilde{L_{\lambda,\mu}}^1$:  $\mu\notin\{\frac{1}{2}\mathbb{Z}\}$ or $\mu\in\frac{1}{2}+\mathbb{Z}$ and $\lambda\neq-3,-1,1$  or $\mu\in\mathbb{Z}$ and $\lambda\neq -1;$

$\widetilde{L_{\lambda,\mu}}^2$:  $\mu\in\frac{1}{2}+\mathbb{Z}$ and $\lambda=-3;$

$\widetilde{L_{\lambda,\mu}}^3$: $\mu\in\frac{1}{2}+\mathbb{Z}$ and $\lambda=-1;$

$\widetilde{L_{\lambda,\mu}}^4$: $\mu\in\frac{1}{2}+\mathbb{Z}$ and $\lambda=1;$

$\widetilde{L_{\lambda,\mu}}^5$:  $\mu\in\mathbb{Z},$ and $\lambda= -1.$






In this work, the algebras denoted by $\widetilde{L_{\lambda,\mu}}^i$  ($i=\overline{1,5}$) will be referred to as \textit{the original deformative
Schr\"{o}dinger-Virasoro algebras}.   Now, we compute $\frac{1}{2}$-derivations on $\widetilde{L_{\lambda,\mu}}^i$  ($i=\overline{1,5}$). We begin by  calculating $\frac{1}{2}$-derivations on $\widetilde{L_{\lambda,\mu}}^1.$

There is  a $\frac{1}{2}\mathbb{Z}$-grading on $\widetilde{L_{\lambda,\mu}}^1$ by
$$W_0= \langle L_0,M_0,  C_L\rangle, \quad W_n= \langle L_n, M_n\rangle, \  n\neq 0, \quad W_{n+\frac{1}{2}}=\langle  Y_{n+\frac{1}{2}}\rangle,$$
then
\begin{center}
    $\widetilde{L_{\lambda,\mu}}^1=\left(\bigoplus_{n\in \mathbb{Z}}W_n\right)\bigoplus\left(\bigoplus_{n\in \mathbb{Z}}W_{n+\frac{1}{2}}\right).$
\end{center}
Hence $\Delta(\widetilde{L_{\lambda,\mu}}^1)$ has a natural $\frac{1}{2}\mathbb{Z}$-grading, i.e.,
\begin{center}$\Delta(\widetilde{L_{\lambda,\mu}}^1)=\left(\bigoplus_{n\in \mathbb{Z}}\Delta_n(\widetilde{L_{\lambda,\mu}}^1)\right)\bigoplus\left(\bigoplus_{n\in \mathbb{Z}}\Delta_{n+\frac{1}{2}}(\widetilde{L_{\lambda,\mu}}^1)\right).$
\end{center}

\begin{lemma}\label{even1}
\begin{enumerate}
\item If $\mu\in\frac{1}{2}+\mathbb{Z}$ and $\lambda\neq-3,-1,1,$ then
   $\Delta_j(\widetilde{L_{\lambda,\mu}}^1)$  is trivial;
\item if $\mu\notin\{\frac{1}{2}\mathbb{Z}\}$ or  $\mu\in\mathbb{Z}$ and $\lambda\neq -1,$ then
 $\Delta_j(\widetilde{L_{\lambda,\mu}}^1)=\langle \operatorname{Id}\rangle\,$  for $\lambda\neq 1$
 and $\Delta_j(\widetilde{L_{1,\mu}}^1)=\langle \operatorname{Id},\varphi_j\rangle\,$ where $\varphi_j(L_n)=\alpha_j M_{n+j} $   for all $n\in\mathbb{Z}.$
\end{enumerate}
\end{lemma}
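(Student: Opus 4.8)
The plan is to use the $\tfrac12\mathbb{Z}$-grading and determine a homogeneous $\tfrac12$-derivation $\varphi_j$ of integer degree $j$ by imposing the defining identity on each structure bracket in turn. From the grading constraints $\varphi_j(W_n)\subseteq W_{n+j}$, $\varphi_j(W_{n+\frac12})\subseteq W_{n+j+\frac12}$ and the invariance of the centre, I would write
\[
\varphi_j(L_n)=\delta_{j,0}\lambda L_n+b_{j,n}M_{n+j},\qquad \varphi_j(M_n)=c_{j,n}L_{n+j}+d_{j,n}M_{n+j}+\delta_{n+j,0}e_{j,n}C_L,
\]
\[
\varphi_j(Y_{n+\frac12})=f_{j,n}Y_{n+j+\frac12},\qquad \varphi_j(C_L)=\delta_{j,0}\lambda C_L .
\]
The reduction of the $L$-to-$L$ part to the scalar term $\delta_{j,0}\lambda L_n$ is justified as follows: the subspace $\mathcal{S}=\langle M_n,Y_{n+\frac12}\rangle$ is an ideal with quotient isomorphic to the Virasoro subalgebra $\textbf{Vir}=\langle L_n,C_L\rangle$, so composing $\varphi_j$ with the projection $\pi$ onto $\textbf{Vir}$ yields a homogeneous $\tfrac12$-derivation of $\textbf{Vir}$, which by \cite{FKL} is trivial; being of degree $j$ it equals $\delta_{j,0}\lambda\,\operatorname{Id}$.

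I would then dispatch the easy coefficients. Feeding the identity into $[Y_{m+\frac12},Y_{n+\frac12}]=(n-m)M_{m+n+1}$ and comparing $L$-components forces $c_{j,n}=0$ for all $n$, after which the $C_L$-components give $e_{j,n}=0$. Testing on $[L_m,M_n]$ and on $[L_m,Y_{n+\frac12}]$ and specialising $m=0$ yields the clean relations
\[
d_{j,n}(n-j+2\mu)=0\quad(j\neq0),\qquad f_{j,n}\big(n-j+\tfrac12+\mu\big)=\delta_{j,0}\lambda\big(n+\tfrac12+\mu\big).
\]
For $j\neq0$ these confine $d_{j,n}$ (resp. $f_{j,n}$) to the single index $n=j-2\mu$ (resp. $n=j-\tfrac12-\mu$), and such an index is an integer exactly when $2\mu\in\mathbb{Z}$ (resp. $\mu\in\tfrac12+\mathbb{Z}$); this is precisely what produces the dichotomy between $\mu\notin\tfrac12\mathbb{Z}$, $\mu\in\mathbb{Z}$ and $\mu\in\tfrac12+\mathbb{Z}$. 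Reinserting the surviving one-parameter possibility into the full identity and choosing a second free index kills it, so $d\equiv f\equiv 0$ for $j\neq0$; the case $j=0$ instead returns $d_{0,n}=f_{0,n}=\lambda$, matching the identity component. Here the boundary values $\lambda\in\{-3,-1,1\}$ with $\mu\in\tfrac12+\mathbb{Z}$ are exactly those governed by the other central extensions $\widetilde{L_{\lambda,\mu}}^2,\widetilde{L_{\lambda,\mu}}^3,\widetilde{L_{\lambda,\mu}}^4$ and so fall outside the domain of $\widetilde{L_{\lambda,\mu}}^1$.

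The heart of the argument is the coefficient $b_{j,n}$, which appears only in the $M$-component of the identity for $[L_m,L_n]$:
\[
2(n-m)b_{j,m+n}=-b_{j,m}\big(m+j-\lambda n+2\mu\big)+b_{j,n}\big(n+j-\lambda m+2\mu\big).
\]
Setting $m=0$ gives $b_{j,n}(n-j-2\mu)=-b_{j,0}(j-\lambda n+2\mu)$, which away from the single pole $n=j+2\mu$ expresses each $b_{j,n}$ as $b_{j,0}$ times the Möbius function $g(n)=\lambda+\dfrac{(\lambda-1)(j+2\mu)}{\,n-j-2\mu\,}$. The decisive step is to substitute this back into the displayed bilinear relation for two independent indices: when $\lambda=1$ one has $g\equiv1$, so $b_{j,n}\equiv b_{j,0}$ is admissible and produces the nontrivial $\tfrac12$-derivation $\varphi_j(L_n)=\alpha_j M_{n+j}$; when $\lambda\neq1$ the function $g$ is genuinely non-constant and the substitution yields a rational identity in $m,n$ that fails unless $b_{j,0}=0$, whence $b\equiv0$. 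I expect this substitution — verifying that a non-constant $g$ is incompatible with the bilinear relation, thereby singling out the value $\lambda=1$ — to be the main obstacle; an affine-ansatz computation, which shows any polynomial solution must be a constant with $A(1-\lambda)=0$ and zero slope, is a convenient way to organise and confirm it.

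Finally I would assemble the cases. For $j\neq0$ the coefficients $c,e,d,f$ all vanish and $b$ vanishes unless $\lambda=1$, in which case $b$ is constant; for $j=0$ the $\delta_{j,0}\lambda$ terms reinstate $\lambda\operatorname{Id}$, supplemented by the constant $b_{0,n}=\alpha_0$ when $\lambda=1$. Matching these conclusions against the parameter ranges defining $\widetilde{L_{\lambda,\mu}}^1$ gives statement (i) (triviality) and statement (ii), with the extra generator $\varphi_j$ appearing exactly for $\lambda=1$, completing the classification modulo the routine verification that the exhibited $\varphi_j$ is indeed a $\tfrac12$-derivation.
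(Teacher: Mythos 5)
Your proposal is correct and follows essentially the same route as the paper's own proof: the same $\frac{1}{2}\mathbb{Z}$-graded ansatz reduced via triviality of $\frac{1}{2}$-derivations of the Virasoro part, the same coefficient equations extracted bracket by bracket, and the same key step of solving the $m=0$ specialization of the $[L_m,L_n]$ relation as a M\"obius-type function and substituting back to single out $\lambda=1$ (your sign in that bilinear relation is the correct one; the paper's displayed version carries a sign typo but its subsequent deductions agree with yours). The only cosmetic deviations are that you obtain the $Y$-coefficient constraint from $[L_m,Y_{n+\frac{1}{2}}]$ where the paper uses $[Y_{m+\frac{1}{2}},Y_{n+\frac{1}{2}}]$, and your quotient-projection phrasing of the Virasoro reduction, neither of which changes the argument.
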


\begin{proof}
Suppose that $\varphi_j\in \Delta_j(\widetilde{L_{\lambda,\mu}}^1))$    be a homogeneous $\frac{1}{2}$-derivation. In this case, we have
\begin{equation}\label{der}
\varphi_j(W_n)\subseteq W_{n+j}, \quad \varphi_j (W_{n+\frac{1}{2}})\subseteq W_{n+j+\frac{1}{2}}.
\end{equation}

\noindent Note that $\widetilde{L_{\lambda,\mu}}^1$ contains a subalgebra $\langle  L_m \ |\ m\in\mathbb{Z}\rangle,$ which isomorphic to the well-known algebra $\textbf{Vir}$ and every $\frac{1}{2}$-derivation on Virasoro algebra is trivial \cite{FKL}.
By (\ref{der}), we can assume that
\begin{center}$\varphi_{j}(L_n)=\delta_{j,0}\lambda_1 L_{n}+\alpha_{j,n}M_{n+j}, \quad \varphi_{j}(Y_{n+\frac{1}{2}})=\sigma_{j,n}Y_{n+j+\frac{1}{2}},$
\end{center}
\begin{center} $\varphi_{j}(M_n)=\beta_{j,n}L_{n+j}+\gamma_{j,n}M_{n+j}+\delta_{n+j,0}c_nC_L, \quad
\varphi_{j}(C_L)=\delta_{j,0}\lambda_1 C_{L},$
\end{center}
where $\alpha_{j,n},  \ \beta_{j,n},\  \gamma_{j,n}, \ \sigma_{j,n}, \ c_{n}\  \in \mathbb{C}.$

Now, we start with applying $\varphi_j$ to both side of
$[Y_{m+\frac{1}{2}},Y_{n+\frac{1}{2}}]=(n-m)M_{m+n+1},$
and we have
\begin{eqnarray}
 &&(n-m)\beta_{j,m+n+1}=0,\label{ad}\\
 &&(n-m)\delta_{m+n+j+1,0}c_{m+n+1}=0,\label{ac}\\
&&2(n-m)\gamma_{j,m+n+1}=(n-m-j)\sigma_{j,m}+(n+j-m)\sigma_{j,n}.\label{bc}
\end{eqnarray}

Putting $n=-1$ in (\ref{ad}), we get
$\beta_{j,m}=0$ for $m\neq -1,$ then taking $m=0,$ $n=-2$, we have $\beta_{j,-1}=0,$ which follows $\beta_{j,m}=0$ for all $m\in\mathbb{Z}.$ From  (\ref{ac}),
we deduce $c_{-j}=0$ for all $j\in\mathbb{Z}.$ Next,  applying
 $\varphi_j$ to $[L_m,L_n]=(n-m)L_{m+n}+\frac{m^3-m}{12}\delta_{m+n,0}C_L$
and $ [L_m,M_n]=(n-\lambda m+2\mu)M_{m+n},$ we have  equations
\begin{equation}\label{bd}
 2(n-m)\alpha_{j,m+n}=(n+j-\lambda m+2\mu)\alpha_{j,n}+(m+j-\lambda n+2\mu)\alpha_{j,m},
 \end{equation}
 \begin{equation}\label{ab}
 2(n-\lambda m+2\mu)\gamma_{j,m+n}=\delta_{j,0}\lambda_1(n-\lambda m+2\mu)+(n+j-\lambda m+2\mu)\gamma_{j,n}.
 \end{equation}

Taking $m=0$ in (\ref{bd}), we obtain $(j-n+2\mu)\alpha_{j,n}=(j-\lambda n+2\mu)\alpha_{j,0}$ for all $n\in \mathbb{Z}.$ If $\lambda=1,$ then we have $\alpha_{j,n}=\alpha_{j,0}$ for all
$n \in \mathbb Z.$ If $\lambda\neq1,$ then it follows
\begin{equation}\label{da}
\alpha_{j,n}=\frac{(j-\lambda n+2\mu)}{(j-n+2\mu)}\alpha_{j,0}, \quad 2\mu\notin \mathbb{Z}.
\end{equation}
Then using (\ref{da}) in  (\ref{bd}), we can get
\begin{equation}\label{dc}
\frac{mn(\lambda-1)(n-m)(2\mu(\lambda+2)-(m+n-j)\lambda+2j)}{(j-m+2\mu)(j-n+2\mu)(j-m-n+2\mu)}\alpha_{j,0}=0, \quad 2\mu\notin \mathbb{Z}.
\end{equation}
Due to  arbitrary $m$ in (\ref{dc}), it gives that
$\alpha_{j,0}=0,$ and from (\ref{da}) it follows $\alpha_{j,n}=0$ for arbitrary $n\in \mathbb Z.$
If $2\mu\in \mathbb{Z},$ then it shows $\alpha_{j,n}=0$ for $n\neq j+2\mu.$
Putting  $m\neq 0$ and $n= j+2\mu$ in (\ref{bd}), it gives $\alpha_{j,j+2\mu}=0,$ so we derive  $\alpha_{j,n}=0$ for all  $n\in \mathbb{Z}.$

\begin{enumerate}
    \item[(1)] If $j\neq0,$ then letting $m=0$ in  (\ref{ab}), we obtain
\begin{equation}\label{derk2}
(n-j+2\mu)\gamma_{j,n}=0.
\end{equation}
\noindent
If $2\mu\notin \mathbb{Z},$ then $\gamma_{j,n}=0$ for all $n\in \mathbb{Z},$ if $2\mu\in \mathbb{Z},$ then $\gamma_{j,n}=0$ for  $n\neq j-2\mu.$ Setting $m\neq 0$ and $n= j-2\mu$ in (\ref{derk2}), it gives $\gamma_{j,j-2\mu}=0,$ which derive  $\gamma_{j,n}=0$ for all  $n\in \mathbb{Z}.$ Next letting $m=n-j$
in  (\ref{bc}), we obtain $\sigma_{j,n}=0$ for all  $n\in \mathbb{Z}.$

 \item[(2)] If $j=0,$ then using (\ref{bc}) and (\ref{ab}), we get
 $\gamma_{0,n}=\sigma_{0,n}=\lambda_1$ for all  $n\in\mathbb{Z}.$

 Hence,  $$\varphi_j(L_n)=\alpha_{j,0}M_{n+j},  \quad \textrm{for} \quad \lambda= 1,$$
 $$\varphi_j=0, \ j\neq 0, \quad \varphi_0=\lambda_1 \ {\rm Id}, \quad \textrm{for} \quad \lambda\neq 1.$$
\end{enumerate}
\end{proof}

\begin{lemma}\label{lem10}
\begin{enumerate}
\item If $\mu\in\frac{1}{2}+\mathbb{Z}$ and $\lambda\neq-3,-1,1,$ then
   $\Delta_{j+\frac{1}{2}}(\widetilde{L_{\lambda,\mu}}^1)=0$ for all $j\in\mathbb{Z}$ ;
\item if $\mu\notin\{\frac{1}{2}\mathbb{Z}\}$ or  $\mu\in\mathbb{Z}$ and $\lambda\neq -1,$ then
$\Delta_{j+\frac{1}{2}}(\widetilde{L_{\lambda,\mu}}^1)=0$ for $\lambda\neq1$ and
 $$\Delta_{j+\frac{1}{2}}(\widetilde{L_{1,\mu}}^1)=\{\varphi_{j+\frac{1}{2}} \ | \ \varphi_{j+\frac{1}{2}}(L_n)=\alpha_j Y_{n+j+\frac{1}{2}}, \ \varphi_{j+\frac{1}{2}}( Y_{n+\frac{1}{2}})=\alpha_jM_{n+j+1}\} $$   for all $n\in\mathbb{Z}.$
\end{enumerate}
\end{lemma}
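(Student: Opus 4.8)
The plan is to mimic the grading argument of Lemma~\ref{lemm10} and compute a single homogeneous piece $\varphi=\varphi_{j+\frac{1}{2}}\in\Delta_{j+\frac{1}{2}}(\widetilde{L_{\lambda,\mu}}^1)$. The constraints $\varphi(W_n)\subseteq W_{n+j+\frac{1}{2}}$ and $\varphi(W_{n+\frac{1}{2}})\subseteq W_{n+j+1}$ force the shape
\[
\varphi(L_n)=\alpha_{j,n}Y_{n+j+\frac{1}{2}},\qquad
\varphi(M_n)=\gamma_{j,n}Y_{n+j+\frac{1}{2}},\qquad
\varphi(Y_{n+\frac{1}{2}})=\sigma_{j,n}L_{n+j+1}+\tau_{j,n}M_{n+j+1},
\]
together with a possible extra term $\rho_nC_L$ in $\varphi(Y_{n+\frac{1}{2}})$ when $n+j+1=0$, and $\varphi(C_L)=0$ (since $C_L\in\operatorname{Ann}(\widetilde{L_{\lambda,\mu}}^1)$ is invariant while the only basis vector of $W_{j+\frac{1}{2}}$, namely $Y_{j+\frac{1}{2}}$, does not lie in the annihilator). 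Substituting this shape into the $\frac12$-derivation identity applied to the four defining brackets turns the problem into scalar recurrences for the families $\alpha_{j,\cdot},\gamma_{j,\cdot},\sigma_{j,\cdot},\tau_{j,\cdot}$, exactly as in Lemma~\ref{even1}.

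First I would clear $\sigma$ and $\gamma$. Comparing the $L$-components in $\varphi([L_m,Y_{n+\frac{1}{2}}])$ gives $2f(m,n)\sigma_{j,m+n}=(n+j+1-m)\sigma_{j,n}$ with $f(m,n)=n+\frac{1}{2}-\frac{\lambda+1}{2}m+\mu$; putting $m=0$ reduces this to $(n-j+2\mu)\sigma_{j,n}=0$, so $\sigma_{j,n}=0$ for $n\neq j-2\mu$, and (when $2\mu\in\mathbb{Z}$) a second substitution removes the remaining coefficient, giving $\sigma\equiv0$. Inserting $\sigma\equiv0$ into $\varphi([Y_{m+\frac{1}{2}},Y_{n+\frac{1}{2}}])=(n-m)\varphi(M_{m+n+1})$ leaves $2(n-m)\gamma_{j,m+n+1}=0$, and choosing $m\neq n$ for each target index yields $\gamma\equiv0$. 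With $\sigma\equiv0$ the $C_L$-component of the $[L,Y]$-relation likewise forces $\rho\equiv0$.

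The remaining information lives in $\alpha$ and $\tau$. The $[L,L]$-bracket produces
\[
2(n-m)\alpha_{j,m+n}=\alpha_{j,n}\Big(n+j+\tfrac{1}{2}+\mu-\tfrac{\lambda+1}{2}m\Big)-\alpha_{j,m}\Big(m+j+\tfrac{1}{2}+\mu-\tfrac{\lambda+1}{2}n\Big),
\]
and the $M$-component of the $[L,Y]$-bracket gives a recurrence expressing $\tau$ through $\alpha$. I would analyze the $\alpha$-recurrence by the method of \eqref{bd}--\eqref{dc}: putting $m=0$ gives $\alpha_{j,n}=\frac{\frac{\lambda+1}{2}n-j-\frac{1}{2}-\mu}{\,n-j-\frac{1}{2}-\mu\,}\alpha_{j,0}$ whenever the denominator is nonzero. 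For $\lambda=1$ the fraction is identically $1$, so $\alpha_{j,n}=\alpha_{j,0}=:\alpha_j$ is constant; the $\tau$-recurrence then collapses (its $m=0$ case reads $(n-j)\tau_{j,n}=\alpha_j(n-j)$) to $\tau_{j,n}=\alpha_j$, reproducing exactly the stated family $\varphi(L_n)=\alpha_jY_{n+j+\frac{1}{2}}$, $\varphi(Y_{n+\frac{1}{2}})=\alpha_jM_{n+j+1}$. For $\lambda\neq1$, reinserting the closed form into the full $[L,L]$-recurrence yields a rational identity in $m,n$ whose numerator is divisible by $\lambda-1$ and does not vanish identically; since the identity must hold for all $m,n$, it forces $\alpha_{j,0}=0$, whence $\alpha\equiv0$, and then the $\tau$-recurrence (with $m=0$ case $(n-j)\tau_{j,n}=0$) gives $\tau\equiv0$, so $\varphi=0$.

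The main obstacle will be the bookkeeping of the degenerate index in the $\alpha$-recurrence when $\lambda\neq1$. The denominator $n-j-\frac{1}{2}-\mu$ vanishes at an integer $n$ only when $\mu\in\frac{1}{2}+\mathbb{Z}$ (the hypothesis of part (1)), namely at $n=j+\frac{1}{2}+\mu$; the $m=0$ relation there reads $\alpha_{j,0}\,(j+\tfrac{1}{2}+\mu)\,(1-\tfrac{\lambda+1}{2})=0$, which gives $\alpha_{j,0}=0$ unless $j+\frac{1}{2}+\mu=0$, and in that subcase the full recurrence forces $\frac{\lambda+1}{2}\alpha_{j,0}=0$, so $\alpha_{j,0}=0$ provided $\lambda\neq-1$. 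Under part (2) the denominator never vanishes at an integer, so this casework is absent. The excluded values $\lambda\in\{-3,-1\}$ in part (1) (and $\lambda=-1$ with $\mu\in\mathbb{Z}$ in part (2)) are precisely those for which the relevant central extension is $\widetilde{L_{\lambda,\mu}}^{2},\widetilde{L_{\lambda,\mu}}^{3}$ or $\widetilde{L_{\lambda,\mu}}^{5}$ rather than $\widetilde{L_{\lambda,\mu}}^{1}$, so they lie outside this lemma; within the stated ranges no surviving solution appears. Once $\sigma,\gamma,\rho$ are cleared the system decouples into the independent $\alpha$- and $\tau$-recurrences, so apart from this casework the argument is routine recurrence propagation, entirely parallel to Lemmas~\ref{even1} and~\ref{lemm10}.
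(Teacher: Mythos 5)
Your proposal is correct, but it runs on a different engine than the paper's proof. The paper's argument (exactly as in Lemma~\ref{lemm10}) turns on the observation that $[\varphi,\mathrm{ad}_{Y_{n+\frac{1}{2}}}]$ is a $\frac{1}{2}$-derivation preserving the $\frac{1}{2}\mathbb{Z}$-grading, hence is of the form already classified in Lemma~\ref{even1}; evaluating this commutator on $M_m$ and $Y_{m+\frac{1}{2}}$ kills the cross coefficients at once (your $\gamma_{j,\cdot}$ and $\sigma_{j,\cdot}$, the paper's $\beta_{j,\cdot}$ and $\gamma_{j,\cdot}$), the case $\lambda\neq 1$ is then dismissed by referring back to the argument of Lemma~\ref{lemm10}, and only the case $\lambda=1$ is finished by direct recurrences, namely (\ref{2.1})--(\ref{2.4}). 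You never invoke the commutator trick: you clear $\sigma$ from the $L$-component of the $[L_m,Y_{n+\frac{1}{2}}]$ relation, $\gamma$ from the $[Y,Y]$ relation, $\rho$ from the $C_L$-component, and then treat $\alpha$ uniformly in $\lambda$ by the closed-form-plus-resubstitution method that the paper uses only in the integer-degree Lemma~\ref{even1}, i.e.\ the analogue of (\ref{da})--(\ref{dc}). I checked this step: the $m=0$ specialization gives $\alpha_{j,n}\bigl(j+\tfrac{1}{2}+\mu-n\bigr)=\alpha_{j,0}\bigl(j+\tfrac{1}{2}+\mu-\tfrac{\lambda+1}{2}n\bigr)$, and reinsertion into the full $[L,L]$-recurrence leaves, after clearing denominators, the obstruction
\begin{equation*}
mn\,\tfrac{\lambda-1}{2}\Bigl[\tfrac{\lambda+1}{2}(m+n)-\bigl(j+\tfrac{1}{2}+\mu\bigr)\tfrac{\lambda+5}{2}\Bigr]\,\alpha_{j,0}=0,
\end{equation*}
which vanishes identically only when $\lambda=1$, or when $\lambda=-1$ and $\mu\in\frac{1}{2}+\mathbb{Z}$ (the $\widetilde{L_{\lambda,\mu}}^3$ range excluded from this lemma) --- exactly as you predicted. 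As for trade-offs: the paper's route is shorter and recycles Lemma~\ref{even1}, at the cost of relying on the (quoted, not proved) fact that bracketing with a derivation preserves $\frac{1}{2}$-derivations and on a ``similarly to Lemma~\ref{lemm10}'' step; yours is self-contained, handles $\lambda=1$ and $\lambda\neq1$ by one mechanism, and makes visible why the excluded parameter values are precisely the exceptional ones. Two small pieces of degenerate-index bookkeeping are left implicit in your sketch and should be recorded: in part (1) with $A:=j+\frac{1}{2}+\mu\neq 0$ you kill $\alpha_{j,0}$ but not the value $\alpha_{j,A}$ at the degenerate index (take $m+n=A$ with $m,n\notin\{0,A\}$, $m\neq n$ in the full recurrence), and in both parts the value $\tau_{j,j}$ needs one substitution with $m+n=j$, $m\neq 0$; both close in a line, and the omission is no worse than the paper's own level of detail.
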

\begin{proof}
Let $\varphi_{j+\frac{1}{2}}\in\Delta_{j+\frac{1}{2}}(\widetilde{L_{\lambda,\mu}}^1)$  be a homogeneous $\frac{1}{2}$-derivation. Then we have
\begin{equation}\label{derk3}
\varphi_{j+\frac{1}{2}}(W_n)\subseteq W_{n+j+\frac{1}{2}}, \quad \varphi_{j+\frac{1}{2}} (W_{n+\frac{1}{2}})\subseteq W_{n+j+1}.
\end{equation}
\noindent
By (\ref{derk3}), we can assume that
\begin{center}$\varphi_{j+\frac{1}{2}}(L_n)=\alpha_{j,n}Y_{n+j+\frac{1}{2}}, \quad
\varphi_{j+\frac{1}{2}}(M_n)=\beta_{j,n}Y_{n+j+\frac{1}{2}},$\end{center}
$$\varphi_{j+\frac{1}{2}}(Y_{n+\frac{1}{2}})=\gamma_{j,n}L_{n+j+1}+\mu_{j,n}M_{n+j+1}+\delta_{n+j+1,0}c_{n}C_{L}.$$
where $\alpha_{j,n},  \ \beta_{j,n},\  \gamma_{j,n}, \ \mu_{j,n}, \   c_{n} \in \mathbb{C}.$
Since $\operatorname{Ann}(\widetilde{L_{\lambda,\mu}}^1)$ is invariant under any $\frac{1}{2}$-derivation, it gives $\varphi_{j+\frac{1}{2}}(C_L)=0$ for all $j\in\mathbb{Z}.$

Let us say $\varphi=\varphi_{j+\frac{1}{2}}.$
The algebra $\widetilde{L_{\lambda,\mu}}^1$ admits a
$\mathbb{Z}_2$-grading.
Namely,
$$\widetilde{L_{\lambda,\mu}}^1_{\overline{0}}=\langle W_j\rangle_{j \in \mathbb Z} \quad
\text{and}
 \quad \widetilde{L_{\lambda,\mu}}^1_{\overline{1}}=\langle W_{j+\frac{1}{2}}\rangle_{j \in \mathbb Z}.$$

The mapping $\varphi$ changes the grading components.
It is known, that the commutator of one derivation and one $\frac{1}{2}$-derivation gives a new $\frac{1}{2}$-derivation.
Hence,
$[\varphi, {\rm ad}_{Y_{n+\frac{1}{2}}}]$ is a $\frac{1}{2}$-derivation which preserve the grading components.
Namely, it is a $\frac 12$-derivation, described in Lemma  \ref{even1},
i.e., $[\varphi, {\rm ad}_{Y_{n+\frac{1}{2}}}]= \alpha_n {\rm Id}$ for $\lambda\neq 1.$ Similarly to the proof of the Lemma \ref{lemm10}, we deduce $\varphi=0$ for $\lambda\neq 1.$

Now, we consider the case of $\lambda=1.$  Using  the Lemma \ref{even1}, we can get these relations $[\varphi, {\rm ad}_{Y_{n+\frac{1}{2}}}](M_m)=\alpha_nM_m$ and $[\varphi, {\rm ad}_{Y_{n+\frac{1}{2}}}](Y_{n+\frac{1}{2}})=\alpha_nY_{m+\frac{1}{2}}.$ It gives $\beta_{j,m}=\gamma_{j,m}=0$ for all
$m \in \mathbb {Z}.$ Next applying $\varphi_{j+\frac{1}{2}}$ to both side of $[L_m,L_n]=(n-m)L_{m+n}+\frac{m^3-m}{12}\delta_{m+n,0}C_L,$ it implies
\begin{eqnarray}
(2(n+j)+1-2m+2\mu)\alpha_{j,n}-(2(m+j)+1-2n+2\mu)\alpha_{j,m}=4(n-m)\alpha_{j,m+n}.\label{2.1}\end{eqnarray}
Taking $m=0$ in (\ref{2.1}), we have $\alpha_{j,n}=\alpha_{j,0}$ for  $n \in \mathbb {Z}.$
Then applying $\varphi_{j+\frac{1}{2}}$ to  $ [L_m,Y_{n+\frac{1}{2}}]=(n+\frac{1}{2}-m+\mu)Y_{m+n+\frac{1}{2}}, $ we obtain

\begin{equation}\label{2.2}
 (2n+1-2m+2\mu)\delta_{m+n+j+1,0}c_{m+n}C_L=0,
\end{equation}
\begin{eqnarray}(n-m-j)\alpha_{j,0}+(n+j+1-m+2\mu)\mu_{j,n}=(2n+1-2m+2\mu)\mu_{j,m+n}.\label{2.4}\end{eqnarray}
From (\ref{2.2}), we derive $c_{-j-1}=0$ and similarly to the previous case from (\ref{2.4}) we imply
$\mu_{j,n}=\alpha_{j,0}$ for all $n \in \mathbb {Z}.$
\end{proof}

Summarizing the results from Lemmas \ref{even1} and \ref{lem10}, we conclude that
if $\lambda\neq1,$ then $\widetilde{L_{\lambda,\mu}}^1$ does not have nontrivial $\frac{1}{2}$-derivations.
On the other side,
Theorem \ref{maini2} gives the full description of nontrivial  $\frac{1}{2}$-derivations of $\widetilde{L_{1,\mu}}^1.$

\begin{theorem}\label{maini2}
Let $\varphi$ be a $\frac{1}{2}$-derivation of the algebra $\widetilde{L_{1,\mu}}^1,$ then there are two  sets of elements from the basic field $\{\alpha_t\}_{t\in\mathbb{Z}}$  and $\{\beta_t\}_{t\in\mathbb{Z}}$
such that
\begin{longtable}{llll}
$\varphi(L_m)$ & = &$\lambda_1L_m+\sum\limits_{t\in{\mathbb{Z}}}\alpha_tM_{m+t}+\sum\limits_{t\in{\mathbb{Z}}}\beta_tY_{m+t+\frac{1}{2}},$\\
$\varphi(Y_{m+\frac{1}{2}})$ & = &$\lambda_1Y_{m+\frac{1}{2}}+\sum\limits_{t\in{\mathbb{Z}}}\beta_tM_{m+t+1},$\\
$\varphi(M_m)$ & = &$\lambda_1M_m,$\\
$\varphi(C_L)$ & = &$\lambda_1C_L.$\\
\end{longtable}
\end{theorem}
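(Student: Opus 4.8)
The plan is to obtain $\varphi$ by reassembling its homogeneous components. Recall the natural $\frac12\mathbb{Z}$-grading of $\Delta(\widetilde{L_{1,\mu}}^1)$ recorded above, under which an arbitrary $\frac12$-derivation splits as $\varphi=\sum_{j\in\mathbb{Z}}\varphi_j+\sum_{j\in\mathbb{Z}}\varphi_{j+\frac12}$ with $\varphi_j\in\Delta_j(\widetilde{L_{1,\mu}}^1)$ and $\varphi_{j+\frac12}\in\Delta_{j+\frac12}(\widetilde{L_{1,\mu}}^1)$. First I would read off the explicit shape of every homogeneous summand from Lemmas \ref{even1} and \ref{lem10}, both specialised to $\lambda=1$, and then collect the resulting contributions onto the four basis families $L_m$, $Y_{m+\frac12}$, $M_m$ and $C_L$.

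From the second part of Lemma \ref{even1} (with $\lambda=1$) each integer-degree component $\varphi_j$ is governed by a single scalar $\alpha_j$ through $\varphi_j(L_n)=\alpha_j M_{n+j}$, and it annihilates $M_n$, $Y_{n+\frac12}$ and $C_L$; the sole extra piece is the degree-$0$ part, which additionally carries the multiple $\lambda_1\,{\rm Id}$. Summing over $j$ therefore contributes $\lambda_1 L_m+\sum_{t\in\mathbb{Z}}\alpha_t M_{m+t}$ to the image of $L_m$, only the diagonal term $\lambda_1 M_m$ to the image of $M_m$ (every off-diagonal even component kills $M_m$), and $\lambda_1 Y_{m+\frac12}$, $\lambda_1 C_L$ to the images of $Y_{m+\frac12}$ and $C_L$ respectively.

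From the second part of Lemma \ref{lem10} (with $\lambda=1$) each half-integer-degree component is controlled by one scalar, which I relabel $\beta_j$, acting by $\varphi_{j+\frac12}(L_n)=\beta_j Y_{n+j+\frac12}$ and $\varphi_{j+\frac12}(Y_{n+\frac12})=\beta_j M_{n+j+1}$, and annihilating $M_n$ and $C_L$. Summing over $j$ adds $\sum_{t\in\mathbb{Z}}\beta_t Y_{m+t+\frac12}$ to the image of $L_m$ and $\sum_{t\in\mathbb{Z}}\beta_t M_{m+t+1}$ to the image of $Y_{m+\frac12}$, leaving the images of $M_m$ and $C_L$ untouched. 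Adding the integer- and half-integer contributions reproduces the four displayed formulas exactly. There is no hard analytic step: the argument is pure aggregation, and the only point that needs care is well-definedness. Since the grading decomposition exhibits $\Delta(\widetilde{L_{1,\mu}}^1)$ as an honest direct sum of its homogeneous pieces --- equivalently, since $\varphi_j(L_0)=\alpha_j M_j$ and $\varphi_{j+\frac12}(L_0)=\beta_j Y_{j+\frac12}$ force $\varphi(L_0)$ to be a finite combination of basis vectors --- all but finitely many of the $\alpha_t$ and $\beta_t$ vanish, so the displayed sums are finite and $\varphi$ is a genuine linear map. The main bookkeeping obstacle is simply to keep the degree-$0$ identity component apart from the off-diagonal components and to check that the $M$-slot is fed only by the diagonal of the degree-$0$ part.
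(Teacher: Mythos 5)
Your proposal is correct and takes essentially the same route as the paper: the paper's own proof of Theorem \ref{maini2} is just the one-line remark that it "follows directly from Lemmas \ref{even1} and \ref{lem10}," which is precisely the component-by-component aggregation you spell out. Your extra observation that only finitely many of the $\alpha_t,\beta_t$ can be nonzero (so the displayed sums define genuine elements of $\widetilde{L_{1,\mu}}^1$) is a well-definedness point the paper leaves implicit, and it is consistent with its graded-decomposition lemma.
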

\begin{proof}
  The proof follows directly from the lemmas \ref{even1} and \ref{lem10}.
\end{proof}


In the following, we aim to classify all transposed Poisson structures on  $\widetilde{L_{1,\mu}}^1.$
\begin{theorem} Let  $(\widetilde{L_{1,\mu}}^1,\cdot,[\cdot,\cdot])$ be a transposed Poisson algebra structure defined on the Lie algebra $\widetilde{L_{1,\mu}}^1.$
 Then the commutative associative multiplication on $(\widetilde{L_{1,\mu}}^1,\cdot)$ has the following form:
\begin{longtable}{lcl}
$L_m\cdot L_n $&$=$&$\sum\limits_{t\in \mathbb{Z}}\alpha_{t}M_{m+n+t}+\sum\limits_{t\in \mathbb{Z}}\beta_{t}Y_{m+n+t+\frac{1}{2}},$\\
$L_m\cdot Y_{n+\frac{1}{2}}$&$=$&$\sum\limits_{t\in \mathbb{Z}}\beta_{t}M_{m+n+t+1},$\\
\end{longtable} where $\alpha_{t},\beta_{t}\in \mathbb{C}$ for all $t\in\mathbb{Z}.$
\end{theorem}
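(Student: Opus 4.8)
The plan is to read off the entire product from the classification of $\frac12$-derivations. For each $z\in\widetilde{L_{1,\mu}}^1$ the left multiplication $\varphi_z\colon x\mapsto z\cdot x$ is a $\frac12$-derivation by Lemma \ref{l01}, so Theorem \ref{maini2} applies to it: $\varphi_z$ is given by the four formulas there, governed by a single scalar $\lambda_1(z)$ and two finitely supported families $\{\alpha_t(z)\}_{t}$, $\{\beta_t(z)\}_{t}$ depending on $z$. Hence the unknown multiplication is completely encoded in these coefficient functions, and I would recover its form purely by imposing commutativity $z\cdot w=w\cdot z$ (associativity is not needed for the stated shape).

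First I would pin down the scalars $\lambda_1$. Comparing the $L$-components in $L_k\cdot L_m=L_m\cdot L_k$ and using the linear independence of $L_k$ and $L_m$ for $k\neq m$ forces $\lambda_1(L_k)=0$ for every $k$. Because $\varphi_{L_k}(M_m)=\lambda_1(L_k)M_m$ and $\varphi_{L_k}(C_L)=\lambda_1(L_k)C_L$, this yields $L_k\cdot M_m=0$ and $L_k\cdot C_L=0$, and it also shows that $L_k\cdot Y_{n+\frac12}=\sum_t\beta_t(L_k)M_{n+t+1}$ is purely $M$-valued. The remaining products vanish by a short bootstrap on Theorem \ref{maini2}: from $M_k\cdot L_n=L_n\cdot M_k=0$ and $C_L\cdot L_n=L_n\cdot C_L=0$ the derivations $\varphi_{M_k}$ and $\varphi_{C_L}$ annihilate every $L_n$, hence vanish identically; and since $\varphi_{Y_{k+1/2}}(L_n)=Y_{k+1/2}\cdot L_n$ is, by the previous line, purely $M$-valued, the theorem forces $\lambda_1(Y_{k+1/2})=0$ and the $Y$-part of $\varphi_{Y_{k+1/2}}$ to vanish, whence $Y\cdot Y=0$. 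Thus only $L\cdot L$ and $L\cdot Y$ can be nonzero.

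The core step is the $L\cdot L$ product. Writing $L_k\cdot L_m=\sum_t\alpha_t(L_k)M_{m+t}+\sum_t\beta_t(L_k)Y_{m+t+\frac12}$ and equating the coefficient of $M_u$ in $L_k\cdot L_m=L_m\cdot L_k$ gives the symmetry $\alpha_{u-m}(L_k)=\alpha_{u-k}(L_m)$ for all $k,m,u$. Substituting $u=k+m+w$ turns this into $\alpha_{w+k}(L_k)=\alpha_{w+m}(L_m)$, i.e. $\alpha_{w+k}(L_k)$ is independent of $k$; calling this common value $\alpha_w$, the coefficient of $M_u$ in $L_k\cdot L_m$ becomes $\alpha_{u-k-m}$. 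The same bookkeeping on the $Y$-coefficients produces universal constants $\beta_t$, so $L_k\cdot L_m=\sum_t\alpha_t M_{k+m+t}+\sum_t\beta_t Y_{k+m+t+\frac12}$, which is the first displayed formula. I expect this reindexing — recognizing that commutativity forces every coefficient to depend only on the total degree $k+m$ — to be the main technical obstacle.

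Finally, for $L_m\cdot Y_{n+\frac12}$ I would exploit the fact built into Theorem \ref{maini2} that a single family $\{\beta_t\}$ controls both $\varphi(L_n)$ and $\varphi(Y_{n+\frac12})$. Matching the $Y$-part of the $L\cdot L$ product just computed against $\sum_t\beta_t(L_m)Y_{n+t+\frac12}$ identifies $\beta_a(L_m)=\beta_{a-m}$; plugging this into $L_m\cdot Y_{n+\frac12}=\varphi_{L_m}(Y_{n+\frac12})=\sum_t\beta_t(L_m)M_{n+t+1}$ and reindexing gives $\sum_t\beta_t M_{m+n+t+1}$, the second displayed formula with exactly the same constants $\beta_t$. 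The one subtlety is to transport this single family correctly from the $L\cdot L$ product to the $L\cdot Y$ product, which is precisely what the shared-$\beta$ structure of Theorem \ref{maini2} enables.
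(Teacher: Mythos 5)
Your proposal is correct and follows essentially the same route as the paper: both reduce everything to Lemma \ref{l01} plus the classification in Theorem \ref{maini2}, then impose only commutativity $\varphi_X(Y)=\varphi_Y(X)$ to force $\lambda_1=0$, kill the $M$-, $C_L$- and $Y\cdot Y$-products, and show the surviving coefficients depend only on the total degree (the paper compares against $L_0$ and derives $\lambda_{1}=0$ from the $C_L$-products, while you compare general pairs $L_k,L_m$ and read $\lambda_1=0$ off the $L$-components, but these are cosmetic variations of the same argument).
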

\begin{proof} We aim to describe the multiplication $\cdot.$ By Lemma \ref{l01}, for every element  $X\in\{L_i, M_i, Y_{i+\frac{1}{2}}, C_L \ | \  {i\in\mathbb Z}\},$ there is a related
$\frac{1}{2}$-derivation $\varphi_{X}$ of $\widetilde{L_{1,\mu}}^1,$ such that $\varphi_{X}(Y)= X \cdot Y.$
Then by Theorem \ref{maini2}, we have that
\begin{longtable}{llll}
$\varphi_{X}(L_m)$ & = &$\lambda_{1,X}L_m+\sum\limits_{t\in{\mathbb{Z}}}\alpha_{t, X}M_{m+t}+\sum\limits_{t\in{\mathbb{Z}}}\beta_{t, X}Y_{m+t+\frac{1}{2}},$\\
$\varphi_{X}(Y_{m+\frac{1}{2}})$ & = &$\lambda_{1,X}Y_{m+\frac{1}{2}}+\sum\limits_{t\in{\mathbb{Z}}}\beta_{t,X}M_{m+t+1},$\\
$\varphi_{X}(M_m)$ & = &$\lambda_{1,X}M_m,$\\
$\varphi_{X}(C_L)$ & = &$\lambda_{1,X}C_L.$
\end{longtable}

Now we consider $\varphi_{X}(Y) = X \cdot Y = Y \cdot X = \varphi_{Y}(X)$ for $X,Y \in   \{ L_i, M_i,Y_{i+\frac{1}{2}}, C_L \ | \  {i \in \mathbb Z} \}.$
Firstly, by $\varphi_{X}(C_L)=\lambda_{1,X}C_L$ it follows $$C_L\cdot X=X\cdot C_L=0.$$ Similarly, we can get
$$M_i\cdot X=X\cdot M_i=0$$
for $X \in   \{ L_i, M_i,Y_{i+\frac{1}{2}}, C_L \ | \  {i \in \mathbb Z} \}.$

\begin{enumerate}[I.]





\item Let $X=L_m$ and $Y= L_0.$ Then the equality $\varphi_{L_m}(L_0)=L_m\cdot L_0=L_0\cdot L_m=\varphi_{L_0}(L_m),$ gives
$$\sum\limits_{t\in \mathbb{Z}}\alpha_{t,L_m}M_{t}+\sum\limits_{t\in \mathbb{Z}}\beta_{t,L_m}Y_{t+\frac{1}{2}}=\sum\limits_{t\in \mathbb{Z}}\alpha_{t,L_0}M_{m+t}+\sum\limits_{t\in \mathbb{Z}}\beta_{t,L_0}Y_{m+t+\frac{1}{2}}.$$

Hence, we obtain  $\alpha_{k,L_m}=\alpha_{k-m,L_0}$ and $\beta_{k,L_m}=\beta_{k-m,L_0}.$


\item Let $X=L_0$ and $Y= Y_{n+\frac{1}{2}},$ then from
$\varphi_{L_0}(Y_{n+\frac{1}{2}})=\varphi_{Y_{n+\frac{1}{2}}}(L_0),$ we get
$$\sum_{t\in \mathbb{Z}}\beta_{t,L_0}M_{n+t+1}=\sum\limits_{t\in \mathbb{Z}}\alpha_{t,Y_{n+\frac{1}{2}}}M_{t}+\sum\limits_{t\in \mathbb{Z}}\beta_{t,Y_{n+\frac{1}{2}}}Y_{t+\frac{1}{2}}.$$

Thus, we obtain  $\beta_{k,Y_{n+\frac{1}{2}}}=0,$ $\alpha_{k,Y_{n+\frac{1}{2}}}=\beta_{k-n-1,L_0}.$
\end{enumerate}

Summarizing all the above parts, we have that the multiplication table of $(\widetilde{L_{1,\mu}}^1, \cdot)$ is given by following non-trivial relations.
\begin{longtable}{lcl}
$L_m\cdot L_n $&$=$&$\sum\limits_{t\in \mathbb{Z}}\alpha_{t,L_0}M_{m+n+t}+\sum\limits_{t\in \mathbb{Z}}\beta_{t,L_0}Y_{m+n+t+\frac{1}{2}},$\\
$L_m\cdot Y_{n+\frac{1}{2}}$&$=$&$\sum\limits_{t\in \mathbb{Z}}\beta_{t,L_0}M_{m+n+t+1},$\\
\end{longtable}


It gives the complete statement of the theorem.
\end{proof}
In the following theorem we give a description  $\frac{1}{2}$-derivations on the original deformative  Schr\"{o}dinger-Virasoro algebras $\widetilde{L_{-3,\mu}}^2$, $\widetilde{L_{-1,\mu}}^3$ and $\widetilde{L_{-1,\mu}}^5.$
\begin{theorem}\label{maini4}
  Every   $\frac{1}{2}$-derivation on $\widetilde{L_{-3,\mu}}^2$, $\widetilde{L_{-1,\mu}}^3,$ $\widetilde{L_{1,\mu}}^4$ and $\widetilde{L_{-1,\mu}}^5$ is trivial.
\end{theorem}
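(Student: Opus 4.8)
The plan is to reuse, for all four algebras, the graded scheme already deployed for $\widetilde{L_{\lambda,\mu}}^1$ in Lemmas~\ref{even1} and~\ref{lem10}: fix a grading, split $\Delta$ into its homogeneous (parity-preserving) and odd (parity-reversing) pieces, compute each piece by feeding a generic homogeneous $\frac12$-derivation into the defining brackets, and reduce the odd piece to the even one through the identity $[\varphi,\mathrm{ad}_{Y_{n+\frac12}}]=\alpha_n\,\mathrm{Id}$ (valid because the bracket of a derivation with a $\frac12$-derivation is again a $\frac12$-derivation, here of even degree). The only structural novelty relative to $\widetilde{L_{\lambda,\mu}}^1$ is the presence of the extra central cocycles $C_{LY},C_{MY},C_M,C_Y$, and the whole point will be that these cocycles supply exactly the constraints that forbid nontrivial $\frac12$-derivations.

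For $\widetilde{L_{-3,\mu}}^2$, $\widetilde{L_{-1,\mu}}^3$ and $\widetilde{L_{-1,\mu}}^5$ the natural $\frac12\mathbb{Z}$-grading $\deg L_n=\deg M_n=n$, $\deg Y_{n+\frac12}=n+\frac12$, $\deg C_L=0$ is already consistent, since each remaining central generator occurs in a single relation and is thereby forced into one well-defined degree: $\deg C_{LY}=-\mu$, $\deg C_{MY}=-3\mu$, $\deg C_Y=-2\mu$. Because $\lambda\in\{-3,-1\}\neq1$ here, the integer-degree computation is carried out exactly as in the $\lambda\neq1$ analysis inside the proof of Lemma~\ref{even1}: the $L\!\to\!M$, $M\!\to\!L$, $M\!\to\!M$ and $Y\!\to\!Y$ off-scalar coefficients are killed by the same recursions (in particular the factor $(\lambda-1)$ in~\eqref{dc} is nonzero, so $\alpha_{j,0}=0$), leaving only $\lambda_1\,\mathrm{Id}$; the extra cocycles contribute merely homogeneous linear relations already satisfied by the scalar solution. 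The odd part then vanishes as in Lemma~\ref{lem10}, the central generators being annihilated because they lie in $\operatorname{Ann}(\widetilde{L_{\lambda,\mu}}^i)$.

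The genuinely delicate case is $\widetilde{L_{1,\mu}}^4$, where $\lambda=1$ and the companion algebra $\widetilde{L_{1,\mu}}^1$ does carry the nontrivial family $\varphi_j(L_n)=\alpha_jM_{n+j}$ of Theorem~\ref{maini2}. Here a grading obstruction intervenes: the single generator $C_M$ occurs both in $[L_m,M_n]$ (active when $m+n=-2\mu$, at bracket-degree $-2\mu$) and in $[Y_{m+\frac12},Y_{n+\frac12}]$ (active at the same index sum but at bracket-degree $-2\mu+1$), so no $\frac12\mathbb{Z}$-grading can accommodate it. I would therefore pass to the coarser $\mathbb{Z}_2$-grading $\widetilde{L_{1,\mu}}^4_{\overline{0}}=\langle L_n,M_n,C_L,C_M\rangle$, $\widetilde{L_{1,\mu}}^4_{\overline{1}}=\langle Y_{n+\frac12},C_{LY}\rangle$, under which the even part is a genuine $\mathbb{Z}$-graded subalgebra; on it the $L\!\to\!M$ analysis still yields $\varphi_j(L_n)=\alpha_jM_{n+j}$ with $\alpha_j$ independent of $n$. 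The decisive computation is then to apply such a $\varphi_j$ to $[L_m,L_n]$: expanding $\tfrac12\bigl(-[L_n,M_{m+j}]+[L_m,M_{n+j}]\bigr)$ and reading off the $C_M$-cocycle of $[L,M]$, the right-hand side acquires a term $\tfrac12\alpha_j\bigl(n(n^2-1)-m(m^2-1)\bigr)C_M$ precisely when $m+n+j+2\mu=0$, whereas the left-hand side $(n-m)\alpha_jM_{m+n+j}$ carries no $C_M$ on that locus (the $C_L$-term lives at $m+n=0$). Since along $m+n=-j-2\mu$ the quantity $n(n^2-1)-m(m^2-1)$ is a cubic in $m$ with leading coefficient $-2$, hence nonzero for all but finitely many integers $m$, a suitable choice forces $\alpha_j=0$, collapsing the even part to $\lambda_1\,\mathrm{Id}$.

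I expect this grading pathology of $C_M$ to be the principal obstacle: one must avoid the $\frac12\mathbb{Z}$-grading on $\widetilde{L_{1,\mu}}^4$ and instead work through the $\mathbb{Z}$-graded even subalgebra, carefully recording which cocycle is active in each degree. But once the $C_M$-obstruction has annihilated the even family and reduced $\Delta_{\overline{0}}$ to the scalars, the odd part is disposed of exactly as in the $\lambda\neq1$ branch of Lemma~\ref{lem10}: the commutator $[\varphi,\mathrm{ad}_{Y_{n+\frac12}}]$ now necessarily lands in $\langle\mathrm{Id}\rangle$, and comparing its action on $M_m$ (choosing $n$ with $n\neq m+j$ and $n+j+1\neq0$) forces the coefficients $\alpha_{j,m},\beta_{j,m},\gamma_{j,m},\mu_{j,m}$ to vanish. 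Assembling the even and odd verdicts for each of the four algebras shows that every $\frac12$-derivation equals $\lambda_1\,\mathrm{Id}$, which is the assertion of the theorem.
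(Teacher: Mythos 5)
Your plan for $\widetilde{L_{-3,\mu}}^2$, $\widetilde{L_{-1,\mu}}^3$ and $\widetilde{L_{-1,\mu}}^5$ (natural $\frac{1}{2}\mathbb{Z}$-grading with the central generators placed in the degree dictated by their unique defining relation, then the $\lambda\neq 1$ branch of Lemma \ref{even1} and the $[\varphi,\mathrm{ad}_{Y_{n+\frac12}}]$ reduction of Lemma \ref{lem10}) is sound and is exactly what the paper intends, since its own proof is a one-line appeal to those lemmas. The genuine gap is in the case you yourself single out as decisive, $\widetilde{L_{1,\mu}}^4$, at the step ``a suitable choice forces $\alpha_j=0$''. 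You assert that on the locus $m+n+j+2\mu=0$ the left-hand side of the $[L_m,L_n]$-condition carries no $C_M$, ``the $C_L$-term lives at $m+n=0$''. But here $2\mu$ is an odd integer, so $j=-2\mu$ is an admissible degree, and for exactly this $j$ the two loci coincide: the left-hand side then contains $\delta_{m+n,0}\frac{m^3-m}{12}\varphi_j(C_L)$, and nothing forces $\varphi_j(C_L)=0$ --- a $\frac12$-derivation only forces $\varphi_j(C_L)$ to be central, and in your own $\mathbb{Z}$-graded even subalgebra $C_M$ sits precisely in degree $-2\mu=j$. Your cubic-in-$m$ argument therefore kills $\alpha_j$ only for $j\neq -2\mu$; at $j=-2\mu$ the $C_M$-obstruction is absorbed by taking $\varphi_j(C_L)=-12\alpha_j C_M$.

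This is not a presentational slip that can be patched, because the surviving solution really is a $\frac12$-derivation. Define $\varphi(L_n)=\alpha M_{n-2\mu}$, $\varphi(C_L)=-12\alpha C_M$, and $\varphi=0$ on all $M_n$, $Y_{n+\frac12}$, $C_{LY}$, $C_M$. Then
\begin{equation*}
\varphi([L_m,L_n])=(n-m)\alpha M_{m+n-2\mu}-\alpha(m^3-m)\delta_{m+n,0}C_M,
\end{equation*}
while
\begin{equation*}
\tfrac12\bigl([\varphi(L_m),L_n]+[L_m,\varphi(L_n)]\bigr)=\alpha(n-m)M_{m+n-2\mu}+\tfrac{\alpha}{2}\bigl(n(n^2-1)-m(m^2-1)\bigr)\delta_{m+n,0}C_M,
\end{equation*}
and on $m+n=0$ the last coefficient equals $-\alpha(m^3-m)$, so the two sides agree; every other instance of the $\frac12$-derivation identity holds trivially because $[M_k,M_n]=0$ and $[M_k,Y_{n+\frac12}]=0$ in $\widetilde{L_{1,\mu}}^4$, and $\varphi(C_L)$ is central. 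Hence $\varphi$ is a nontrivial $\frac12$-derivation, contradicting the conclusion your argument was meant to reach --- and, as far as I can see, contradicting the theorem itself for $\widetilde{L_{1,\mu}}^4$ with the relations as printed. (A separate issue: the grading pathology of $C_M$ that you correctly observed is actually a symptom of a misprint --- with the printed supports the $[L_k,[Y_{m+\frac12},Y_{n+\frac12}]]$ Jacobi identity fails, and the consistent $YY$-cocycle is supported on $m+n+2\mu+1=0$; but repairing that misprint restores the $\frac12\mathbb{Z}$-grading and does not remove the counterexample above, which never touches the $[Y,Y]$ bracket.)
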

\begin{proof}
The proof    is similar to the proof of  Lemmas  \ref{even1} and \ref{lem10}.
\end{proof}

By Theorem \ref{maini4}, the following is straightforward.
\begin{corollary}
The infinite-dimensional Lie algebras $\widetilde{L_{-3,\mu}}^2$, $\widetilde{L_{-1,\mu}}^3,$ $\widetilde{L_{1,\mu}}^4$ and $\widetilde{L_{-1,\mu}}^5$ have  no nontrivial  transposed Poisson algebra structures.
\end{corollary}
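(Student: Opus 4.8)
The plan is to follow the two‑step scheme of Lemmas \ref{even1} and \ref{lem10}: split $\Delta$ into homogeneous pieces for the $\frac12\mathbb{Z}$-grading inherited from $\deg L_n=\deg M_n=n$, $\deg Y_{n+\frac12}=n+\frac12$, and then treat the integer-degree and half-integer-degree $\frac12$-derivations separately. For $\widetilde{L_{-3,\mu}}^2$, $\widetilde{L_{-1,\mu}}^3$ and $\widetilde{L_{-1,\mu}}^5$ each new cocycle ($C_{LY}$; resp. $C_{LY},C_{MY}$; resp. $C_Y$) occurs in exactly one bracket, so it receives a well-defined degree and the grading survives; since moreover $\lambda\neq1$ in all three cases, the computation is a verbatim repetition of the $\lambda\neq1$ branch of Lemmas \ref{even1} and \ref{lem10}. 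The relations (\ref{bd}), (\ref{da}) and (\ref{dc}) force $\alpha_{j,n}=0$, and the identities obtained from $[L_m,M_n]$, $[L_m,Y_{n+\frac12}]$ and $[Y_{m+\frac12},Y_{n+\frac12}]$ then annihilate the remaining coefficients; the extra cocycle contributes only supplementary equations that kill the central images. Hence these three algebras admit no $\frac12$-derivation beyond $\lambda_1\operatorname{Id}$.

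The genuinely delicate case, and the main obstacle, is $\widetilde{L_{1,\mu}}^4$, for two reasons. First, here $\lambda=1$, the precise value at which the undeformed algebra $\widetilde{L_{1,\mu}}^1$ acquires the nontrivial families of Theorem \ref{maini2}; setting the new cocycles to zero recovers $\widetilde{L_{1,\mu}}^1$, so the deformation is what must destroy these families. Second, the element $C_M$ appears simultaneously in $[L_m,M_n]$ (at bracket degree $-2\mu$) and in $[Y_{m+\frac12},Y_{n+\frac12}]$ (at bracket degree $-2\mu+1$), so it cannot be assigned a single $\frac12\mathbb{Z}$-degree and the grading argument does not apply verbatim. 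I would therefore not grade the full algebra; instead I would use that the structural (non-central) brackets of $\widetilde{L_{1,\mu}}^4$ coincide with those of $\widetilde{L_{1,\mu}}^1$, so that running the computations of Lemmas \ref{even1} and \ref{lem10} forces $\varphi$, modulo the new central directions, into the shape of Theorem \ref{maini2}, leaving as the only free parameters the coefficients $\alpha_j$ of the families $\varphi_j(L_n)=\alpha_jM_{n+j}$ and $\varphi_{j+\frac12}(L_n)=\alpha_jY_{n+j+\frac12}$, $\varphi_{j+\frac12}(Y_{n+\frac12})=\alpha_jM_{n+j+1}$.

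The decisive point is then to show that the $C_M$ cocycle kills every $\alpha_j$. For the half-integer family I would substitute into
\[
\varphi\bigl([L_m,Y_{n+\tfrac12}]\bigr)=\tfrac12\bigl([\varphi(L_m),Y_{n+\tfrac12}]+[L_m,\varphi(Y_{n+\tfrac12})]\bigr)
\]
and compare the $C_M$-coefficients: the bracket $[Y_{m+j+\frac12},Y_{n+\frac12}]$ on the right contributes $-\tfrac12\alpha_j(m+j+\mu)\bigl((m+j+\mu)^2-1\bigr)\delta_{m+n+j+2\mu,0}C_M$, while the left-hand side carries no $C_M$ provided the indices keep the symbol $\delta_{m+n+\mu+\frac12,0}$ of $C_{LY}$ inactive. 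Choosing $m,n$ with $m+n+j+2\mu=0$ then yields $\alpha_j(m+j+\mu)\bigl((m+j+\mu)^2-1\bigr)=0$; since $\mu\in\frac12+\mathbb{Z}$ the quantity $m+j+\mu$ lies in $\frac12+\mathbb{Z}$ and can never equal $0,\pm1$, so $\alpha_j=0$. The single residual value $j=\frac12-\mu$, at which the two $\delta$-symbols would collide, I would treat by imposing instead $m+n+j+1+2\mu=0$ and reading the $C_M$-term off $[L_m,M_{n+j+1}]$, which gives $m(m^2-1)\alpha_j=0$ with $m\neq0,\pm1$, again forcing $\alpha_j=0$. The integer-degree family is eliminated in the same manner: applying $\varphi_j$ to $[L_m,L_n]$ and comparing the $C_M$-coefficients that enter through $[\varphi_j(L_m),L_n]$ and $[L_m,\varphi_j(L_n)]$ yields $(n-m)\bigl(n^2+nm+m^2-1\bigr)\alpha_j\delta_{m+n+j+2\mu,0}=0$, and an admissible pair with $m+n+j+2\mu=0$, $n\neq m$ and $n^2+nm+m^2\neq1$ forces $\alpha_j=0$. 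Thus $\widetilde{L_{1,\mu}}^4$ also carries only $\lambda_1\operatorname{Id}$, and the four algebras are settled. The one point demanding genuine care throughout is the bookkeeping of the several $\delta$-supports, arranged so that in each comparison exactly one central term survives with a provably nonzero coefficient, and it is precisely the hypothesis $\mu\in\frac12+\mathbb{Z}$ that guarantees this nonvanishing.
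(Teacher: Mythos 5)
Your overall strategy is the same as the paper's: establish that every $\frac12$-derivation of the four algebras is trivial (the paper's Theorem \ref{maini4}) and then invoke the lemma that a Lie algebra without nontrivial $\frac12$-derivations carries no nontrivial transposed Poisson structure. For $\widetilde{L_{-3,\mu}}^2$, $\widetilde{L_{-1,\mu}}^3$ and $\widetilde{L_{-1,\mu}}^5$ your reduction is sound, and you correctly isolated the real difficulty that the paper's two one-line proofs pass over in silence: on $\widetilde{L_{1,\mu}}^4$ the element $C_M$ occurs in $[L_m,M_n]$ at degree $-2\mu$ and in $[Y_{m+\frac12},Y_{n+\frac12}]$ at degree $1-2\mu$, so no $\frac12\mathbb{Z}$-grading is available. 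But your treatment of that case has a genuine gap. Working only ``modulo the new central directions'' you may not assume $\varphi(C_L)\in\mathbb{C}C_L$, nor that $\varphi(L_k)$ and $\varphi(Y_{k+\frac12})$ have no components along $C_M$: invariance of the annihilator only gives $\varphi(C_L)\in\langle C_L,C_{LY},C_M\rangle$. Hence the left-hand sides of your $C_M$-comparisons are not zero: the $[L_m,Y_{n+\frac12}]$ comparison carries $(n+\tfrac12-m+\mu)$ times the $C_M$-component of $\varphi(Y_{m+n+\frac12})$ (and its $\delta$-supports couple the families of index $j$ and $j-1$, which you cannot separate without a grading), while the $[L_m,L_n]$ comparison carries $(n-m)$ times the $C_M$-component of $\varphi(L_{m+n})$ and, when $m+n=0$, the term $\tfrac{m^3-m}{12}$ times the $C_M$-component of $\varphi(C_L)$. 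The first two defects are reparable (fix $m+n$ and compare coefficients of powers of $m$), but the last one is fatal exactly at $j=-2\mu$: there the support $m+n+j+2\mu=0$ \emph{is} $m+n=0$, so every ``admissible pair'' meets the Virasoro cocycle and your deduction $\alpha_j=0$ breaks down.

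In fact the statement you are proving at that point is false. Since $2\mu$ is an odd integer, define $\psi$ on $\widetilde{L_{1,\mu}}^4$ by
\begin{equation*}
\psi(L_n)=M_{n-2\mu},\qquad \psi(C_L)=-12\,C_M,\qquad
\psi(M_n)=\psi(Y_{n+\frac12})=\psi(C_{LY})=\psi(C_M)=0 .
\end{equation*}
Every instance of the $\frac12$-derivation identity is trivially satisfied except on $[L_m,L_n]$, where, using $[L_m,M_{n-2\mu}]=(n-m)M_{m+n-2\mu}-m(m^2-1)\delta_{m+n,0}C_M$ and $n(n^2-1)-m(m^2-1)=-2(m^3-m)$ for $n=-m$, both sides equal $(n-m)M_{m+n-2\mu}-(m^3-m)\delta_{m+n,0}C_M$. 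So $\psi$ is a nontrivial $\frac12$-derivation of $\widetilde{L_{1,\mu}}^4$; both your claim and the paper's Theorem \ref{maini4} fail for this algebra, and no bookkeeping of $\delta$-supports can rescue the route ``no nontrivial $\frac12$-derivations $\Rightarrow$ no nontrivial transposed Poisson structures''. The corollary itself survives, but for $\widetilde{L_{1,\mu}}^4$ it needs the longer argument the paper uses for $\widetilde{L_{1,\mu}}^1$: carrying out your coefficient comparisons with the central components tracked shows that $\Delta(\widetilde{L_{1,\mu}}^4)=\langle\operatorname{Id},\psi\rangle$; then for a would-be transposed Poisson product each multiplication operator equals $\lambda_X\operatorname{Id}+a_X\psi$ by Lemma \ref{l01}, and commutativity $\varphi_{L_m}(L_n)=\varphi_{L_n}(L_m)$, i.e. $\lambda_{L_m}L_n+a_{L_m}M_{n-2\mu}=\lambda_{L_n}L_m+a_{L_n}M_{m-2\mu}$ for $m\neq n$, forces $\lambda_{L_m}=a_{L_m}=0$ for all $m$; finally $\varphi_X(L_m)=\varphi_{L_m}(X)=0$ gives $\lambda_X=a_X=0$ for every basis element $X$, so the product is identically zero.
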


\end{document}